\DeclarePairedDelimiter{\ceil}{\lceil}{\rceil}
\newtheorem{teo}{Theorem}[section]
\newtheorem{rmk}{Remark}[section]
\newcommand{\ep}{\varepsilon_{\mathcal{T}}}
\newcommand{\R}{\mathbb{R}}
\DeclareMathOperator*{\argmin}{arg\,min}
\newcommand{\second}[2][black]{\emph{\textcolor{#1}{#2}}}
\journal{Applied Numerical Mathematics}
\begin{document}

\begin{frontmatter}



\title{A HJB-POD approach for the control of \\nonlinear PDEs on a tree structure}


\author[label1]{Alessandro Alla}
\author[label2]{Luca Saluzzi}

\address[label1]{Department of mathematics, PUC-Rio, Rio de Janeiro, Brazil, alla@mat.puc-rio.br}
\address[label2]{Department of mathematics, Gran Sasso Science Institute, L'Aquila, Italy, luca.saluzzi@gssi.it}

\begin{abstract}
The Dynamic Programming approach allows to compute a feedback control for nonlinear problems, but suffers from the {\em curse of dimensionality}. The computation of the control relies on the resolution of a nonlinear PDE, the Hamilton-Jacobi-Bellman equation, with the same dimension of the original problem. Recently, a new numerical method to compute the value function on a tree structure has been introduced. The method allows to work without a structured grid and avoids any interpolation.

Here, we aim at testing the algorithm for nonlinear two dimensional PDEs. We apply model order reduction to decrease the computational complexity since the tree structure algorithm requires to solve many PDEs. Furthermore, we prove an error estimate which guarantees the convergence of the proposed method. Finally, we show efficiency of the method through numerical tests.

\end{abstract}

\begin{keyword}
Optimal control, Hamilton-Jacobi-Bellman equation, Model order reduction, Proper Orthogonal Decomposition, Tree structure, Error Estimates
\MSC[2010]{49L20, 49L25, 49J20,78M34,65N99,62H25}



\end{keyword}

\end{frontmatter}


\section{Introduction}
\label{sec:intro}


The dynamic programming (DP) approach, introduced by Bellman in the late '50, allows to obtain a feedback control by means of the knowledge of the value function. Thus, we solve a nonlinear Partial Differential Equation (PDE) known as Hamilton-Jacobi-Bellman (HJB) equation that has the same dimension of the optimal control problem. It is well-known that this problem suffers from the {\em curse of dimensionality}: typically this equation has to be solved on a space grid and this is the major bottleneck for numerical methods in high-dimension. We refer to \cite{BCD97} and \cite{FF13} for a complete description of theoretical and numerical results, respectively. 

The focus of this paper is to solve finite horizon optimal control problems for nonlinear PDEs. It is straightforward to understand the difficulty of the problem when dealing with a DP approach, since the discretization of PDEs leads to a very large system of ODEs, which makes the problem not feasible on a structured grid. In the literature several methods have been introduced to mitigate the curse of dimensionality. Although a complete description of numerical methods for HJB goes beyond the scopes of this work, we distinguish between numerical methods for the control of ODEs and PDEs via the HJB equation. In the former we mention, among others, domain decomposition methods and iterative schemes based on a semi-Lagrangian approach (see e.g. \cite{CCFP12,AFK15} and the references therein). On the other hand, to compute feedback control of PDEs, it is very common the use of model order reduction techniques to reduce the complexity of the system and, therefore, the dimension of the corresponding HJB equation. In particular, we refer to the Proper Orthogonal Decomposition (POD, see e.g. \cite{Vol11}) which will constitute one of the building blocks for the current paper. The POD method allows to compute low-rank orthogonal projectors by means of Singular Value Decomposition (SVD) upon snapshots of the dynamical system at given time instances. This is a serious issue of this approach for optimal control problems since the control input is not known in advance and it is usually necessary to plug a forecast to compute the snapshots. However, on a structure grid, POD has been successfully coupled with the HJB approach for the control of PDEs. We refer to the pioneering work \cite{KVX04} and to \cite{AFV17} for error estimates of the method. We note that this approach is only a mitigation of the curse of dimensionality because it is not possible to work with a reduced space with dimension larger than $5$ and the aim of the POD method is to make the problem feasible even for very high dimensional equation such as PDEs. Other approaches to mitigate the curse of dimensionality are built upon the sparse grid method (see e.g. \cite{GK17}), the spectral elements method (see \cite{KK18}) and, more recently, a tensor decomposition (see \cite{DKK19}). For the sake of completeness, we mention that the control of PDEs can be solved with other methods such as, among others, open loop techniques (see e.g \cite{HPUU09}) and model predictive control (see e.g \cite{GP11}).

Recently, in \cite{AFS18} the authors proposed a new method based on a time discretization of the dynamics which allows to mimic the discrete dynamics in high-dimension via a tree structure, considering a discretized control space. The method deals with a finite horizon optimal control problem and, in the discretization, the tree structure replaces the space grid which allows to increase the dimension of the state space. However, the tree structure complexity increases exponentially due to the number of time steps and control inputs. To decrease the complexity of the tree a pruning technique has been implemented to reduce the number of branches in the tree obtaining rather accurate results. Error estimates for the method can be found in the recent work \cite{SAF18}. Therefore, it is clear that the method is expensive when we deal with PDEs since it requires to solve many equations for several control inputs. 
It is then natural to couple the TSA with POD in order to speed up the method. With the approach studied in the current paper we have four major advantages:
\begin{enumerate} 
\item we build the snapshots set upon all the trajectories that appear in the tree, avoiding the selection of a forecast for the control inputs which is always not trivial for model reduction,
\item the application of POD also allows an efficient pruning since it reduces the dimension of the problem and provides information on the most variable components,
\item the theory of DPP is valid on the whole state space $\R^d$ but, in general, for numerical reasons we need to restrict our equation to a bounded domain. Our method avoids to define the numerical domain for the projected problem, which is a difficult task since we lose the physical meaning of the reduced coordinates,
\item we are not restricted to consider a reduced space dimension smaller than $5$ as in e.g. \cite{KVX04, AFV17},  which was a limitation of the method since many classes of PDEs require more basis functions to capture the essential features.
\end{enumerate}

Finally, we remark that to obtain a low-dimensional problem completely independent from the dimension of the original system, we use the Discrete Empirical Interpolation Method as in \cite{CS10}. To validate our approach we also provide a-priori error estimate for the coupling between TSA and model order reduction.

The paper is organized as follows: we define the optimal control problem and the DP approach in Section \ref{sec:ocp}. We recall the tree structure algorithm in Section \ref{sec:tsa} and the POD method in Section \ref{Sec:pod}. In Section \ref{sec:coupl} we present, step by step, the coupling between POD and the TSA, and in Section \ref{sec:err} we provide an error estimate for the coupled method. Finally, numerical tests for two-dimensional nonlinear PDEs are shown in Section \ref{sec:tests}. We give our conclusions and perspectives in Section \ref{sec:con}.

\section{The optimal control problem}\label{sec:ocp}

In this section we describe the optimal control problem and the essential features of the DP approach.
 Let us consider a large system of ordinary differential equations in the following form:
\begin{equation}
\left\{ \begin{array}{ll}\label{ode}
\dot{{ y}}(s)&={ A}{ y}(s)+{ F}(s, { y}(s))+{ B} u(s),\;\; s\in(t,T],\\
 y(t)& =x,\\
\end{array} \right.
\end{equation}
where $x\in\R^d$ is a given initial data, $ A \in \R^{d\times d}, { B}\in \R^{d\times m}$ are given matrices and ${ F}:[t,T]\times\R^d\rightarrow\R^d$ is a continuous function in both arguments and locally Lipschitz-type with respect to the second variable. We will denote by $y:[t,T]\rightarrow\R^d$ the solution, by $u:[t,T]\rightarrow\R^m$ the control and by
\[\mathcal{U}=\{u:[t,T]\rightarrow U, \mbox{measurable} \}
\]
the set of admissible controls where $U\subset \R^m$ is a compact set. We will assume that there exists a unique solution for \eqref{ode} for each $u\in\mathcal{U}$. Whenever we want to stress the dependence on the control $u$, we write $y(s,u)$.

 This wide class of problems arises in many applications, especially from the numerical approximation of PDEs. In such cases, the dimension of the problem is the number of spatial grid points used for the discretization and it can be very large. 

To ease the notation we will denote the right hand side as follows: 
\begin{equation}
f(y(s),u(s),s):={ A}{ y}(s)+{ F}(s,{ y}(s))+{ B} u(s).
\end{equation}
To select the optimal trajectory, we consider the following cost functional
\begin{equation}\label{cost}
 J_{x,t}(u):=\int_t^T L(y(s,u),u(s),s)e^{-\lambda (s-t)}\, ds+g(y(T,u))e^{-\lambda (T-t)},
\end{equation}
where $L:\R^d\times\R^m\times [t,T]\rightarrow\R$ is the running cost, $g:\R^d \rightarrow \R$ is the final cost and $\lambda\geq0$ is the discount factor. We will suppose that the functions $L(\cdot, u,t )$ and $g(\cdot)$ are Lipschitz continuous. The optimal control problem then reads:
\begin{equation}\label{ocp:full}
\inf_{u\in \mathcal{U}} J_{x,t}(u) \mbox{ s.t. } y(s) \mbox{ satisfies } \eqref{ode}.
\end{equation}

The final goal is the computation of the control in feedback form $u(s)=\eta(y(s),s),$ in terms of the state equation $y(s),$ where $\eta$ is the feedback map. To derive optimality conditions, we use the Dynamic Programming Principle (DPP). We first define the value function 
\begin{equation}
v(x,t):=\inf\limits_{u\in\mathcal{U}} J_{x,t}(u).
\label{value_fun}
\end{equation}
Given the above assumptions, the value function $v$ is bounded and continuous in $\R^d\times [t,T]$ and it satisfies the DPP, i.e. for every $\tau\in [t,T]$: 
\begin{equation}\label{dpp}
v(x,t)=\inf_{u\in\mathcal{U}}\left\{\int_t^\tau L(y(s),u(s),s) e^{-\lambda (s-t)}ds+ v(y(\tau),\tau) e^{-\lambda (\tau-t)}\right\}.
\end{equation}
Due to \eqref{dpp}, we can derive the HJB equation for every $(x,s)\in\R^d\times [t,T)$:
\begin{equation}\label{HJB}
\left\{
\begin{array}{ll} 
&\dfrac{\partial v}{\partial s}(x,s) -\lambda v(x,s)+ \min\limits_{u\in U }\left\{L(x, u, s)+ \nabla v(x, s) \cdot f(x,u, s)\right\} = 0, \\
&v(x,T) = g(x).
\end{array}
\right.
\end{equation}
We refer to \cite{BCD97} for more details on the topic. Once the value function has been computed, it is possible to obtain the optimal feedback control as:
\begin{equation}\label{feedback}
u^*(s):=  \argmin_{u\in U }\left\{L(x,u,s)+ \nabla v(x,s) \cdot f(x,u,s)\right\}. 
\end{equation}

\subsection{Dynamic Programming on a Tree Structure}
\label{sec:tsa}	

In this section we will recall the {\em finite horizon control problem} and its approximation by the tree structure algorithm (see \cite{AFS18} for a complete description of the method and \cite{SAF18} for theoretical results). 
The computation of analytical solutions of Equation \eqref{HJB} is a difficult task due to its nonlinearity and approximation techniques should take in consideration discontinuities in the gradient (see \cite{FF13} and the references therein). Here, we discretize equation \eqref{HJB}, only partitioning the time interval $[t,T]$ with step size $\Delta t: = (T-t)/\overline N$, where $\overline{N}$ is the total number of steps. Thus, for $n= \overline{N}-1,\dots, 0$ and every $x\in \R^d$, we have 
\begin{equation}\label{SL}
\left\{
\begin{array}{ll}
V^{n}(x)&=\min\limits_{u\in U}[\Delta t\,L(x, u, t_n)+e^{-\lambda \Delta t}V^{n+1}(x+\Delta t f(x, u, t_n))], \\
V^{\overline{N}}(x)&=g(x).
\end{array}
\right.
\end{equation}
where $t_n=t+ n \Delta t,\, t_{\overline N} = T$ and $V^n(x):=V(x, t_n).$

The term $V^{n+1}(x+\Delta t f(x, u, t_n))$ is usually computed by interpolation on a grid, since $x+\Delta t f(x, u, t_n)$ is in general not a grid point. To avoid the use of interpolation, we build a non-structured grid with a tree structure. 

We first discretize the control domain into $M$ discrete controls and to ease the notation, in what follows, we keep denoting $U$ also the discrete set of controls. The tree will be denoted by $\mathcal{T}:=\cup_{j=0}^{\overline{N}} \mathcal{T}^j,$ where each level $\mathcal{T}^j$ contains all the nodes of the tree at time $t_j$. We proceed as follows: first we start from the initial state $x$, which will form the first level $\mathcal{T}^0$. Then, we follow the discrete dynamics, given e.g. by an explicit Euler scheme, inserting the discrete control $u_j \in U$, obtaining 
$$\zeta_j^1 = x+ \Delta t \, f(x,u_j,t),\qquad j=1,\ldots,M.$$ 
Therefore, we have $\mathcal{T}^1 =\{\zeta_1^1,\ldots, \zeta^1_M\}$. We can characterize the nodes by their $n-$th {\em time level} as follows
$$\mathcal{T}^n = \{ \zeta^{n-1}_i + \Delta t f(\zeta^{n-1}_i, u_j,t_{n-1}) \}_{j=1}^{M},\quad i = 1,\ldots, M^{n-1},$$
and  the tree can be shortly defined as
 $$\mathcal{T}:= \{ \zeta_j^n  \}_{j=1}^{M^n},\quad n=0,\ldots \overline{N},$$ 
where the nodes $\zeta^n_i$ are obtained following the dynamics at time $t_n$ with the controls $\{u_{j_k}\}_{k=0}^{n-1}$:
\begin{equation*}
\begin{array}{ll}
\zeta_{i_n}^n &= \zeta_{i_{n-1}}^{n-1} + \Delta t f(\zeta_{i_{n-1}}^{n-1}, u_{j_{n-1}},t_{n-1})\\
&= x+ \Delta t \sum_{k=0}^{n-1} f(\zeta^k_{i_k}, u_{j_k},t_k), 
\end{array}
\end{equation*}
with $\zeta^0 = x$, $i_k = \ceil[\bigg]{\dfrac{i_{k+1}}{M}}$ and $j_k\equiv i_{k+1} \mbox{mod } M$, where $\ceil{\cdot}$ is the ceiling function.

The cardinality of tree increases exponentially, i.e. $|\mathcal{T}| =O(M^{\overline{N}})$,
where $M$ is the number of controls and $\overline{N}$ the number of time steps. 
To miti\-gate this problem, we consider the following pruning rule: given a threshold $\ep>0$, we can cut off a new node $\zeta^n_i$, if it verifies the following condition with a certain $\zeta^n_j$
\begin{equation}\label{tol_cri}
\begin{array}{cc}
\Vert \zeta^n_i-\zeta^n_j \Vert \le \ep,
\mbox{ for  }i\ne j \mbox{ and } n = 0,\ldots, \overline{N}.
\end{array}
\end{equation}

The pruning rule \eqref{tol_cri} helps to save a huge amount of memory. If we choose the tolerance properly, e.g. $\ep = O(\Delta t^2)$, we keep the same accuracy of the approach without pruning, as shown in \cite{SAF18}. To increase the order of convergence, one could use a higher order method for the discretization of the ODE \eqref{ode}. More details can be found in \cite{AFS18b}.


The computation of the numerical value function $V(x,t)$ will be done on the tree nodes
\begin{equation}\label{num:vf}
V(x,t_n)=V^n(x), \quad \forall x \in \mathcal{T}^n, 
\end{equation}
and it follows directly from the DPP. The tree $\mathcal{T}$ will form the spatial grid and we can write a time discretization for \eqref{HJB} as follows:
\begin{equation}
\begin{cases}
V^{n}(\zeta^n_i)= \min\limits_{u\in U} \{e^{-\lambda \Delta t} V^{n+1}(\zeta^n_i+\Delta t f(\zeta^n_i,u,t_n)) +\Delta t \, L(\zeta^n_i,u,t_n) \}, \\
 \qquad \qquad\qquad \qquad\qquad  \zeta^n_i \in \mathcal{T}^n\,, n = \overline{N}-1,\ldots, 0, \\
V^{\overline{N}}(\zeta^{\overline{N}}_i)= g(\zeta_i^{\overline{N}}), \qquad\qquad \qquad\qquad \qquad\qquad   \zeta_i^{\overline{N}} \in \mathcal{T}^{\overline{N}}.
\end{cases}
\label{HJBt2}
\end{equation}

Since the control set $U$ is discrete, the minimization is computed by comparison. We refer to \cite{B73,KKK16} for other techniques to compute the minimization in \eqref{HJBt2}. A detailed comparison and discussion about the classical method and tree structure algorithm can be found in \cite{AFS18}, whereas the interested reader will find in \cite{SAF18} the error estimates for the proposed algorithm.
The computation of the feedback on a tree structure takes advantage of the discrete control set and therefore during the computation of the value function, we can store the indices which provide the optimal trajectory. More details on the computation of the feedback control are given in Section \ref{sec:coupl}.

\section{Model order reduction and POD method}  \label{Sec:pod}

In this section we first recall the POD method for the state equation \eqref{ode} and later how to apply it to reduce the dimension of the optimal control problem \eqref{ocp:full}.
\subsection{POD for the state equation}

The solution of the system \eqref{ode} may be very expensive and it is useful to deal with projection techniques to reduce the complexity of the problem. Although a complete description of model order reduction methods goes beyond the scopes of this work, here we recall the POD method. We refer the interested reader to e.g. \cite{Sir87, Vol11} for more details on the topic and to \cite{BGW15} for a review of different projection techniques.

Let us assume we have computed a numerical (or analytical if possible) solution of \eqref{ode} on the time grid points $t_j$, $j\in \{0,\ldots,N\}$ for some given control inputs. Then, we collect the {\em snapshots} $\{y(t_i)\}_{i=0}^N$ into the matrix ${ Y} = [{ y}(t_0),\ldots, { y}(t_N)] \in \R^{d \times (N+1) }$. The aim of the method is to determine a POD basis $ \Psi=\{\psi_1,\ldots,\psi_\ell\}$ of rank $\ell\ll \min\{d,N+1\}$ to describe the set of data collected in time by solving the following minimization problem:
\begin{equation}\label{pbmin}
\min_{ {{\psi}}_1,\ldots,{{\psi}}_\ell\in\R^d} \sum_{j=0}^N \left|{ y}(t_j)-\sum_{i=1}^\ell \langle { y}(t_j),{{\psi}}_i\rangle{{\psi}}_i\right|^2\quad \mbox{such that }\langle {{\psi}}_i,{{\psi}}_j\rangle=\delta_{ij}.
\end{equation}
The associated norm is given by the Euclidean inner product $|\cdot|^2=\langle\cdot,\cdot\rangle$. The solution of \eqref{pbmin} is obtained by the SVD of the snapshots matrix ${ Y}= \Psi \Sigma V^T$, where we consider the first $\ell-$columns $\{{{\psi}}_i\}_{i=1}^\ell$ of the orthogonal matrix $\Psi$. The selection of the rank of POD basis is based on the error computed in \eqref{pbmin} which is related to the singular values neglected. We will choose $\ell$ such that $\mathcal{E}(\ell)\approx 0.999$, with
\begin{equation}\label{POD_cri}
\mathcal{E}(\ell)=\dfrac{\sum_{i=1}^\ell \sigma_i^2}{\sum_{i=1}^{\min\{d,N+1\}} \sigma_i^2},
\end{equation}
where $\{\sigma_i\}_{i=1}^{\min\{d,N+1\}}$ are the singular values of $Y$.   

However, the error strongly depends on the quality of the computed snapshots. This is clearly a limit when dealing with optimal control problems, since the control input is not known a-priori and it is necessary to have a reasonable forecast. In Section \ref{sec:coupl} we will explain how to select the control input $u(t)$ to solve \eqref{ocp:full}.

To ease the notation, in what follows, we will denote by $\Psi\in\R^{d\times\ell}$ the POD basis of rank $\ell$. Let us assume that the POD basis $\Psi$ have been computed and make use of the following assumption to obtain a reduced dynamical system: 
\begin{equation}\label{pod_ans}
{ y}(s)\approx { \Psi} { y^\ell}(s),
\end{equation}
where ${ y}^\ell(s)$ is a function from $[t,T]$ to $\R^\ell$. If we plug \eqref{pod_ans} into the full model \eqref{ode} and exploit the orthogonality of the POD basis, the reduced model reads: 

\begin{equation}\label{pod_sys}
\left\{\begin{array}{l}
\dot{{ y}}^\ell(s)={ A}^\ell { y}^\ell(s)+ { \Psi}^T{ F}(s, { \Psi} { y}^\ell(s)) + { B}^\ell u(s),\\
{ y}^\ell(t)={ x^\ell},
\end{array}\right.
\end{equation}
where $ { A}^\ell= { \Psi}^T { A} { \Psi}, { B}^\ell= { \Psi}^T { B}$ and ${ x^\ell}={ { \Psi}}^Tx\in\R^\ell$. We also note that $ { A}^\ell\in\R^{\ell\times\ell}$ and ${ B}^\ell \in\R^{\ell\times m}$. Error estimates for the reduced system \eqref{pod_sys} can be found in \cite{KV02}. In what follows we are going to define the reduced dynamics as:
\begin{equation}
f^\ell(y^\ell(s),u(s),s):= { A^\ell}{ y^\ell}(s)+\Psi^T{ F}(s,\Psi y^\ell(s))+ B^\ell u(s).
\end{equation}
\paragraph{\bf Discrete Empirical Interpolation Method}
The solution of \eqref{pod_sys} is still computationally expensive, since the nonlinear term ${ F}(s,{ \Psi} { y}^\ell(s))$ depends on the dimension of the original problem, i.e. the variable ${ \Psi} { y}^\ell(s)\in\R^d$. To avoid this issue the {\em Empirical Interpolation Method} (EIM, \cite{BMNP04}) and {\em Discrete Empirical Interpolation Method} (DEIM, \cite{CS10}) were introduced. 

The computation of the POD basis functions for the nonlinear part is related to the set of the snapshots $F(t_j,{ y}(t_j))$, where ${ y}(t_j)$ are already computed from \eqref{ode}. We denote by $\Phi\in\R^{d\times k}$ the POD basis functions of rank $k\ll \min\{d,N+1\}$ of the nonlinear part. The DEIM approximation of ${  F}(t,{ y}(t))$ is given in the following form:
\begin{equation}\label{f_DEIM}
{ F}^{\mbox{\tiny DEIM}}(s,{ y}^{\mbox{\tiny DEIM}}(s)):=\Phi({ S}^T \Phi)^{-1} { F}(s,{ y}^{\mbox{\tiny DEIM}}(s)),
\end{equation}
where ${ S}\in\R^{d\times k}$ and \second{${ y}^{\mbox{\tiny DEIM}}(s):={ S}^T{ \Psi}{ y}^\ell(s)$}. Here, we assume that each component of the nonlinearity is independent from each other, \second{i.e. we assume that $F(s,y):=[\bar{F}(s,y_1(s)),\ldots, \bar{F}(s,y_d(s))]$, with $\bar{F}: [t,T] \times\R\rightarrow \R$,} then the matrix $S$ can be moved into the  nonlinearity. Again, we refer to \cite{CS10} for a complete description of the method and extensions to more general nonlinear functions.
  The role of the matrix $S$ is to select interpolation points to evaluate the nonlinearity. The selection is made according to the LU decomposition algorithm with pivoting~\cite{CS10}, or following the QR decomposition with pivoting~\cite{DG15}. We finally note that all the quantities in \eqref{f_DEIM} are independent of the full dimension $d,$ since the quantity $\Psi^T\Phi (S^T\Phi)^{-1}\in \R^{\ell\times k}$ can be precomputed. Typically the dimension $k$ is much smaller than the full dimension. This allows the reduced order model to be completely independent of the full dimension as follows:
\begin{equation}\label{pod_sysdeim}
\left\{\begin{array}{l}
\dot{{ y}}^\ell(s)={ A}^\ell { y}^\ell(s)+{ \Psi}^T { F}^{\mbox{\tiny{ DEIM}}}(s,{ y}^{\mbox{\tiny DEIM}}(s))+{ B}^\ell u(s),\\
{ y}^\ell(t)=x^\ell.
\end{array}\right.
\end{equation}

In what follows, we are going to define the reduced POD-DEIM dynamics as:
\begin{equation}
f^{\ell, \mbox{\tiny DEIM}}(y^\ell(s),u(s),s):={ A^\ell}{ y^\ell}(s)+\Psi^T F^{\mbox{\tiny DEIM}}(s,S^T \Psi y^\ell(s))+ B^\ell u(s).
\end{equation}
The DEIM error is given by:

\begin{equation}\label{DEIM_err}
\|\widetilde{F} - \widetilde F^{\mbox{\tiny DEIM}}\|_2\leq  c\|( I-\Phi\Phi^T) \widetilde{F}\|_2,\quad \,\,\, \mbox{with} \,\,\, c=\|( S^T \Phi)^{-1}\|_2,
\end{equation}
for a given snapshots set $\widetilde F=\{F(t_j,y(t_j))\}_{j=0}^N$ and its DEIM approximation $\widetilde F^{\mbox{\tiny DEIM}}=\Phi({ S}^T \Phi)^{-1} S^T \widetilde F$ as shown in \cite{CS10, DG15}. A further reduction might also be performed by using the dynamic mode decomposition as in \cite{AK17}.

\subsection{POD for the optimal control problem}

The key ingredient to compute feedback control is the knowledge of the value function expressed in \eqref{SL}, which is a nonlinear PDE whose dimension is given by the dimension of \eqref{ode}. It is clear that its approximation is very expensive. Therefore, we are going to apply the POD method to reduce the dimension of the dynamics and then solve the corresponding (reduced) discrete DPP which is now feasible and defined below. Let us first define the reduced running cost and the reduced final cost as
 $$L^\ell(x^\ell,u,s) = L(\Psi x^\ell,u,s), \quad g^\ell(x^\ell) = g(\Psi x^\ell). $$
Next, we introduce the reduced optimal control problem for \eqref{ocp:full}. For a given control $u$, we denote by $y^\ell(s,u)$ the unique solution to \eqref{pod_sysdeim} at time $s$. Then, the reduced cost is given by
\begin{equation}
J_{x^\ell, t}^\ell(u)=\int_t^T L^\ell\big(y^\ell(s,u),u(s),s\big)e^{-\lambda (s-t)}\,ds + g^\ell(y^\ell(T))e^{-\lambda (T-t)},
\end{equation}
and, the POD approximation for \eqref{ocp:full} reads as follows:
\begin{equation}
\label{ocp:red}
\min_{u\in U}  J_{x^\ell,t}^\ell(u)\quad\text{such that }\quad y^\ell(t) \mbox{ solves }\eqref{pod_sys}.
\end{equation}
Finally, we define the reduced value function $v^\ell(x^\ell,t)$ as
\begin{equation}
v^\ell(x^\ell,t):=\inf\limits_{u\in \mathcal{U}} J_{x^\ell,t}^\ell(u)
\label{value_fun_red}
\end{equation}
and the reduced HJB equation:
\begin{equation}\label{HJB-POD}
\left\{
\begin{array}{ll} 
&\dfrac{\partial v^\ell}{\partial s}(x^\ell,s) -\lambda v^\ell(x^\ell,s)+ \min\limits_{u\in U }\left\{L^\ell(x^\ell, u, s)+\nabla v^\ell(x^\ell, s) \cdot  f^\ell(x^\ell,u, s)\right\} = 0, \\
&v^\ell(x^\ell,T) = g^\ell( x^\ell), \qquad \qquad \qquad \qquad \qquad \qquad \qquad  (x^\ell,s) \in \mathbb{R}^\ell \times [t,T).
\end{array}
\right.
\end{equation}
Alternatively, one could further approximate the nonlinear term using DEIM and replace the dynamics \eqref{pod_sys}  with \eqref{pod_sysdeim} in \eqref{ocp:red}, providing an impressive acceleration of the algorithm as shown in Section \ref{sec:tests}.

\section{HJB-POD method on a tree structure}\label{sec:coupl}

In this section we explain, step by step, how to use model reduction techniques on a tree structure in order to obtain an efficient approximation of the value function and to deal with complex problems such as PDEs. 

\paragraph{\bf Computation of the snapshots} When applying POD for optimal control problems there is a major bottleneck: the choice of the control inputs to compute the snapshots.  Thus, we store the tree $\mathcal{T} = \cup_{n=0}^N \mathcal{T}^n$ for a chosen $\Delta t$ and discrete control set $U$. This set turns out to be a very good candidate for the snapshots matrix since it delivers all the possible trajectories we want to consider. To summarize the snapshots set is ${ Y} = \mathcal{T} = \cup_{n=0}^N \mathcal{T}^n$. In the numerical tests, we will use $\Delta t = 0.1$ and $2$ controls to compute the snapshots that, as shown in Section \ref{sec:tests}, will be sufficient to catch the main features of the controlled problem. 

\paragraph{\bf Computation of the basis functions} The computation of the basis $\Psi$ has been described in Section \ref{Sec:pod}. We are going to solve the following optimization problem:
\begin{equation}\label{pbmin2}
\min_{ {{\psi}}_1,\ldots,{{\psi}}_\ell\in\R^d} \sum_{j=1}^N \sum_{\underline{u}_j \subset U^j} \left|{ y}(t_j,\underline{u}_j)-\sum_{i=1}^\ell \langle { y}(t_j,\underline{u}_j),{{\psi}}_i\rangle{{\psi}}_i\right|^2\quad \mbox{such that }\langle {{\psi}}_i,{{\psi}}_j\rangle=\delta_{ij},
\end{equation}
where $\underline{u}_j=(u_1,\ldots,u_j) \subset U^j= U \times \ldots \times U$ and
$$
y(t_j,\underline{u}_j)=y_0 + \Delta t \sum_{k=0}^{j-1} f(y_{k},u_{k+1},t_k).
$$


In this context we have no restrictions on the choice of the number of basis $\ell$, since we will solve the HJB equation on a tree structure. In former works, e.g. \cite{KVX04,AFV17}, the authors were restricted to choose $\ell \approx 4$ to have a feasible reduction of the HJB equation. Here, the dimension of the state variable is not a major issue. On the other hand, the pruning strategy will turn out to be crucial for the feasibility of the problem. 

It is well-known that the error in \eqref{pbmin} is given by the sum of the singular values neglected. We recall that we will chose $\ell$ such that $\mathcal{E}(\ell)\approx 0.999,$ with $\mathcal{E}(\ell)$ defined in \eqref{POD_cri}.

\paragraph{\bf Construction of the reduced tree}  
Having computed the POD basis, we build a new tree which might consider a different $\Delta t$ and/or a finer control space with respect to  the snapshots set. We will denote the projected tree as $\mathcal{T}^\ell$ with its generic $n-$th level given by:
$$\mathcal{T}^{n,\ell} = \{ \zeta^{n-1,\ell}_i + \Delta t f^\ell(\zeta^{n-1,\ell}_i, u_j,t_{n-1}),\; j = 1,\ldots, M,\; i = 1,\ldots, M^{n-1} \},$$ 
where the reduction of the nonlinear term $f^\ell$ can be done via POD or POD-DEIM as in \eqref{f_DEIM}. The first level of the tree is clearly given by the projection of the initial condition, i.e. $\mathcal{T}^{0,\ell}= \Psi^T x$. Then, the procedure follows the full dimensional case, but with the projected dynamics. We will show how this approach speeds up the method keeping high accuracy.
Even if we have reduced the dimension of the problem, the cardinality  of the tree $\mathcal{T}^{n,\ell}$ depends on the number of the discrete controls and the time step chosen as in the high-dimensional case. It is clear that each resolution of the PDE will be faster, but it is still necessary to apply a pruning rule which reads:

\begin{equation}\label{tol_cri_pod}
\begin{array}{cc}
\Vert \zeta^{n,\ell}_i-\zeta^{n,\ell}_j \Vert \le \ep,
\mbox{ for  }i\ne j \mbox{ and } n = 0,\ldots, \overline{N}.
\end{array}
\end{equation}

As proposed in \cite{AFS18}, the evaluation of \eqref{tol_cri_pod} can be computed in a more efficient way, considering the most variable components by the principal component analysis. This technique is incorporated in our algorithm, since we have already computed the POD basis and the most variable component turns out to be the first one $y_1^\ell$. It will be sufficient to reorder the nodes according to their first components to accelerate the pruning criteria.

\paragraph{\bf Approximation of the reduced value function} 
The numerical reduced value function $V^\ell(x^\ell,t)$ will be computed on the tree nodes in space as 
\begin{equation}\label{num:vf}
V^\ell(x^\ell,t_n)=V^{n,\ell}(x^\ell), \quad \forall x^\ell \in \mathcal{T}^{n,\ell}.
\end{equation}
 Then, the computation of the reduced value function follows directly from the DPP. Defined the grid $\mathcal{T}^{n,\ell}=\{\zeta^{n,\ell}_j\}_{j=1}^{M^n}$ for $n=0,\ldots, \overline{N}$, we can write a time discretization for \eqref{HJB} as follows: 

\begin{equation}
\begin{cases}
V^{n,\ell}(\zeta^{n,\ell}_i)= \min\limits_{u\in U} \{e^{-\lambda \Delta t} V^{n+1,\ell}(\zeta^{n,\ell}_i+\Delta t f^\ell (\zeta^{n,\ell}_i,u,t_n)) +\Delta t \, L^\ell(\zeta^{n,\ell}_i,u,t_n) \}, \\
 \qquad \qquad\qquad \qquad\qquad  \zeta^{n,\ell}_i \in \mathcal{T}^{n,\ell}\,, n = \overline{N}-1,\ldots, 0, \\
V^{\overline{N},\ell}(\zeta^{\overline{N},\ell}_i)= g^\ell(\zeta_i^{\overline{N},\ell}), \qquad\qquad \qquad\qquad \qquad\qquad   \zeta_i^{\overline{N},\ell} \in \mathcal{T}^{\overline{N},\ell}.
\end{cases}
\label{HJBt-POD}
\end{equation}

\paragraph{\bf Computation of the feedback control} 

The computation of the feedback control strongly relies on the fact we deal with a discrete control set $U$. Indeed, when we compute the reduced value function, we store the control indices corresponding to the $\argmin$ in \eqref{HJBt-POD}. The optimal trajectory is than obtained by following the path of the tree with the controls chosen such that 	

\begin{equation} \label{feed:tree-POD}
u^{n,\ell}_{*}:=\argmin\limits_{u\in U} \left\{ e^{-\lambda \Delta t}V^{n+1,\ell}(\zeta^{n,\ell}_*+\Delta t f^\ell(\zeta^{n,\ell}_*,u,t_n)) +\Delta t \, L^\ell(\zeta^{n,\ell}_*,u,t_n) \right\},
\end{equation}
\begin{equation*} 
\zeta^{n+1,\ell}_* \in \mathcal{T}^{n+1,\ell} \; s.t. \; \zeta^{n,\ell}_* \rightarrow^{u_{n}^{*}} \zeta^{n+1,\ell}_*,
\end{equation*}
for $n=0,\ldots, \overline{N}-1$, where the symbol $\rightarrow^u$ stands for the connection of two nodes by the dynamics corresponding to the control $u$.

Once the control $u_*^{n,\ell}$ has been computed, we plug it into the high dimensional problem \eqref{ode} and compute the optimal trajectory.
  
\section{Error estimates for the HJB-POD method on a TSA} \label{sec:err}
In this section we derive an error estimate for the HJB-POD approximation \eqref{HJBt-POD} on a tree structure. In what follows, we assume that the functions $f, L, g$ are bounded:
 \begin{align}
 \begin{aligned}\label{Mf}
|f(x,u,s)|& \le M_f,\quad |L(x,u,s)| \le M_L,\quad |g(x)| \le M_g, \cr
&\forall\, x \in \mathbb{R}^d, u \in U \subset \mathbb{R}^m, s \in [t,T], 
\end{aligned}
\end{align}
the functions $f$ and $L$ are Lipschitz-continuous with respect to the first variable
\begin{align}
\begin{aligned}\label{Lf}
&|f(x,u,s)-f(y,u,s)| \le L_f |x-y|, \quad |L(x,u,s)-L(y,u,s)| \le L_L |x-y|,\cr
&\qquad\qquad\qquad\qquad\forall \, x,y \in \mathbb{R}^d, u \in U \subset \mathbb{R}^m, s \in [t,T], 
\end{aligned}
\end{align}
and the cost $g$ is also Lipschitz-continuous:
\begin{equation}
|g(x)-g(y)| \le L_g |x-y|, \quad \forall x,y \in \mathbb{R}^d.
\label{Lg}
\end{equation}
Furthermore, let us assume that the functions $L$ and $g$ are semiconcave
\begin{align}
\begin{aligned}\label{CL}
&L(x+z,u,t+\tau)-2L(x,u,t)+L(x-z,u,t-\tau) \le C_L (|z|^2+ \tau^2),\cr
&g(x+z)-2g(x)+g(x-z) \le C_g |z|^2, \qquad \forall x,z \in \mathbb{R}^d, u \in U, t,\tau \ge 0,
\end{aligned}
\end{align}
and assume that $f$ verifies the following inequality:
\begin{align}
\begin{aligned}\label{Cf}
&|f(x+z,u,t+\tau)-2f(x,u,t)+f(x-z,u,t-\tau) | \le C_f (|z|^2+ \tau^2),\cr
 &\qquad\qquad\qquad \forall u \in U, \;\forall x,z \in \mathbb{R}^d,\, \forall t,\tau \ge 0.
\end{aligned}
\end{align}
We also introduce the continuous-time extension of the DDP
\begin{align}\label{HJBt3}
\begin{aligned}
&V(x,s) = \min\limits_{u\in U} \{e^{-\lambda (t_{n+1}-s)} V(x+(t_{n+1}-s) f(x,u,s), t_{n+1}) + (t_{n+1}-s) \, L(x,u,s) \}, \\
&V(x,T) = g(x), \hspace{6cm} x \in \mathbb{R}^d, s \in [t_n,t_{n+1}),
\end{aligned}
\end{align}
and the POD version for the continuous-time extension \eqref{HJBt3} which reads:
\begin{align}\label{HJBt4}
\begin{aligned}
&V^\ell(x^\ell,s) = \min\limits_{u\in U} \{e^{-\lambda (t_{n+1}-s)} V^\ell(x^\ell+(t_{n+1}-s) f^\ell(x^\ell,u,s), t_{n+1}) +\\
&\qquad\qquad\qquad\qquad\qquad\quad\qquad\qquad\qquad\qquad\qquad +(t_{n+1}-s) \, L^\ell(x^\ell,u,s) \}, \\
&V^\ell(x^\ell,T) = g^\ell(x^\ell), \hspace{6cm} x^\ell \in \mathbb{R}^\ell, s \in [t_n,t_{n+1}).
\end{aligned}
\end{align}

Given the exact solution $v(x,s)$ and its POD discrete approximation $V^\ell(x^\ell,s)$, we prove the following theorem which provides an error estimate for the proposed method.

\begin{teo}
Let us assume $\eqref{Mf}$-$\eqref{Cf}$ hold true, then there exists a constant $C(T)$ such that
\begin{equation}
\sup_{s \in [t,T]}|v(x,s)- V^\ell(x^\ell,s) | \le C(T) \left(  \left( \sum_{i \ge \ell +1} \sigma_i^2 \right)^{1/2}+\Delta t\right)
\label{err_est}
\end{equation}
where the $\{\sigma_i\}_{i=1}^{\min\{N+1,d\}}$ are the singular values of the snapshots matrix.
\label{teoHJBPOD}
\end{teo}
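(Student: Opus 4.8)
The plan is to prove \eqref{err_est} by inserting the full semi-discrete value function and splitting
\begin{equation*}
|v(x,s)-V^\ell(x^\ell,s)| \le |v(x,s)-V(x,s)| + |V(x,s)-V^\ell(x^\ell,s)|,
\end{equation*}
where $V$ solves \eqref{HJBt3} and $V^\ell$ solves \eqref{HJBt4}. The two terms are conceptually independent: the first is the error of the time discretisation alone, while the second is the pure model reduction error, since \eqref{HJBt3} and \eqref{HJBt4} are the full and reduced versions of the \emph{same} scheme. I would treat the first term by quoting the convergence analysis of the tree structure algorithm in \cite{SAF18}: under the boundedness \eqref{Mf}, the Lipschitz bounds \eqref{Lf}--\eqref{Lg} and, crucially, the semiconcavity hypotheses \eqref{CL}--\eqref{Cf}, that analysis yields $\sup_{s\in[t,T]}|v(x,s)-V(x,s)| \le C_1(T)\,\Delta t$, which supplies the $\Delta t$ contribution in \eqref{err_est}.

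For the reduction term I would use that both $V$ and $V^\ell$, unrolled backward from $T$, are minima of discrete cost functionals taken over the finite set of control sequences, evaluated along the Euler trajectories $\{y_n\}$ (issued from $x$) and $\{y^\ell_n\}$ (issued from $x^\ell=\Psi^T x$) respectively. By $|\min_u a(u)-\min_u b(u)|\le\max_u|a(u)-b(u)|$ it suffices to compare, for a fixed control sequence, the two functionals. Since $L^\ell(\cdot,u,s)=L(\Psi\,\cdot,u,s)$ and $g^\ell(\cdot)=g(\Psi\,\cdot)$, the Lipschitz properties \eqref{Lf}, \eqref{Lg} and $e^{-\lambda(\cdot)}\le1$ give
\begin{equation*}
|V(x,s)-V^\ell(x^\ell,s)| \le L_L\,\Delta t\!\sum_{n}|y_n-\Psi y^\ell_n| + L_g\,\max_n|y_n-\Psi y^\ell_n|,
\end{equation*}
so the whole reduction error is governed by the discrete state error $\max_n|y_n-\Psi y^\ell_n|$.

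To estimate this I would set $P:=\Psi\Psi^T$ and $\delta_n:=\Psi^T y_n-y^\ell_n\in\R^\ell$, and use the Galerkin identity $f^\ell(x^\ell,u,s)=\Psi^T f(\Psi x^\ell,u,s)$, valid for the pure-POD dynamics \eqref{pod_sys}. Writing $y_n-\Psi y^\ell_n=(I-P)y_n+\Psi\delta_n$ and subtracting the two Euler recursions, the Lipschitz bound \eqref{Lf} gives
\begin{equation*}
|\delta_{n+1}| \le (1+\Delta t\,L_f)|\delta_n| + \Delta t\,L_f\,|(I-P)y_n|,\qquad \delta_0=0,
\end{equation*}
and a discrete Gronwall inequality yields $\max_n|\delta_n|\le e^{L_f(T-t)}L_f\,\Delta t\sum_k|(I-P)y_k|$. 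Finally, because the snapshot set consists precisely of the tree trajectories, the POD optimality in \eqref{pbmin}--\eqref{pbmin2} gives $\sum_k|(I-P)y_k|^2\le\sum_{i\ge\ell+1}\sigma_i^2$; combined with $\Delta t\sum_k|\cdot|\le(T-t)\max_k|\cdot|$ and $\max_k|(I-P)y_k|\le(\sum_{i\ge\ell+1}\sigma_i^2)^{1/2}$, this bounds $\max_n|y_n-\Psi y^\ell_n|$ by $C_2(T)(\sum_{i\ge\ell+1}\sigma_i^2)^{1/2}$. Collecting the two contributions gives \eqref{err_est}; this last trajectory estimate is essentially the reduced-dynamics bound of \cite{KV02}.

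The step I expect to be most delicate is the control of the projection sum $\sum_k|(I-P)y_k|^2$ by the singular-value tail. This identification requires that the trajectories entering the reduced scheme be represented by the snapshot set, i.e. that the discrete control set (and time step) used to build the reduced tree be consistent with the one used to generate the snapshots in \eqref{pbmin2}; when the reduced tree refines the control space this must be handled by an additional density/stability argument. A secondary technical point is the careful treatment of the non-grid times $s\in(t_n,t_{n+1})$, where the first Euler step has length $t_{n+1}-s$: one must verify that the same Gronwall recursion closes with the modified initial step, and that all constants assemble into a single $C(T)=\max\{C_1(T),C_2(T)\}$ with the expected exponential-in-$T$ growth.
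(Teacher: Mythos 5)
Your proposal is correct and follows essentially the same route as the paper's own proof: the same triangular splitting through the semi-discrete value function $V$ with the $\Delta t$ term quoted from \cite{SAF18}, the same reduction of $|V-V^\ell|$ to a discrete trajectory error under a common control sequence, the same decomposition $|y_n-\Psi y_n^\ell|\le |(I-\Psi\Psi^T)y_n|+|\Psi^T y_n-y_n^\ell|$ with a Gr\"onwall recursion for the reduced-coordinate part, and the same use of POD optimality over the tree (snapshot set) to bound the projection terms by $\bigl(\sum_{i\ge\ell+1}\sigma_i^2\bigr)^{1/2}$. The only deviations are cosmetic (the two-sided $|\min-\min|\le\max|\cdot|$ comparison instead of the paper's one-sided argument with the reduced-optimal control repeated in both directions, and $\Delta t\sum_k\le(T-t)\max_k$ instead of Cauchy--Schwarz in the final assembly), and the delicate points you flag, namely consistency of the reduced tree's controls and time step with the snapshot tree and the shortened first step for $s\in(t_n,t_{n+1})$, are precisely the ones the paper's proof passes over silently.
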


\begin{proof}
We observe that, by triangular inequality, the approximation error can be decomposed in two parts:
\begin{equation}\label{triang}
|v(x,s)-V^\ell(x^\ell,s) | \le |v(x,s)-V(x,s)| +|V(x,s)- V^\ell(x^\ell,s) |.
\end{equation}
An error estimate for the first term has been already obtained in \cite{SAF18}:
\begin{equation}
\sup_{(x,s)\in \mathbb{R}^d \times [0,T]} \left|V(x,s)-v(x,s)\right| \le \widehat{C}(T) \Delta t.
\label{2est}
\end{equation}
Let us focus on the second term of the right hand side of \eqref{triang}. Without loss of generality, we consider $\lambda=0$. 
For $s=T$, the estimate follows directly by the assumptions on $g$. Considering $x \in \mathbb{R}^d$ and $s \in [t_n,t_{n+1})$, we can write
$$
V(x,s)-V^\ell(x^\ell,s) \le 
$$
$$
 V(x_{n+1},t_{n+1}) -V^\ell(x^{\ell}_{n+1},t_{n+1})+ (t_{n+1}-s)\left(L(x,u^n_*,s) - L^\ell(x^\ell,u^n_*,s)\right) \le
 $$
 \begin{equation}
  V(x_{n+1},t_{n+1}) -V^\ell(x^{\ell}_{n+1},t_{n+1}) + (t_{n+1}-s)\, L_L |x-\Psi x^\ell|,
 \label{boh}
 \end{equation}
 
  where $u^n_*, x_{n+1}$ and $x^\ell_{n+1}$ are defined as
$$
u^n_*= \argmin_{u\in U }\left\{V^\ell(x^\ell+ (t_{n+1}-s) f^\ell(x^\ell,u,s),t_{n+1})+ (t_{n+1}-s) \, L^\ell(x^\ell, u,s)\right\},
$$
$$
x_{n+1}=x+ (t_{n+1}-s) f(x,u^n_*,s),\qquad  x^\ell_{n+1}=x^\ell+ (t_{n+1}-s) f^\ell(x^\ell,u^n_*,s).
$$

 We define the trajectory path and its POD approximation respectively as 
$$
x_{m}:= x+ \sum_{k=n}^{m-1} \alpha_k f(x_k, u_*^k,\bar{t}_k), \quad
x^\ell_m:=x^\ell +\sum_{k=n}^{m-1} \alpha_k f^\ell(x^\ell_k, u_*^k,\bar{t}_k),
$$
where
$$
\alpha_k= \begin{cases}
		 t_{n+1}-s & k=n\\
		 \Delta t & k \ge n+1
		 \end{cases} ,
		 \quad
\bar{t}_k= \begin{cases}
		 s & k=n\\
		 t_k & k \ge n+1
		 \end{cases} ,
$$
$$
u^{k}_*= \argmin_{u\in U }\left\{V^\ell\left(x^\ell_k+ \alpha_k  f^\ell( x^\ell_k,u,\bar{t}_k),t_{k+1}\right)+\alpha_k L^\ell(x^\ell_k,u,\bar{t}_{k}) \right\}, k \ge n,
$$
with $x_n=x$ and $x_n^\ell=x^\ell$. Then, iterating \eqref{boh} we obtain
\begin{equation}
V(x,s)-V^\ell(x^\ell,s) \le  L_L  \sum_{m=n}^{\overline{N}-1} \alpha_m |x_{m}-\Psi x_{m}^\ell|+ L_g|x_{\overline{N}}-\Psi x_{\overline{N}}^\ell|.
\label{est}
\end{equation}

Defining
$$
\eta_m= \begin{cases}
		L_L \alpha_m & m\in \{n, \ldots \overline{N}-1 \}\\
		 L_g & m=\overline{N}
		 \end{cases},
$$

\noindent
we can write
$$
V(x,s)-V^\ell(x^\ell,s) \le  \sum_{m=n}^{\overline{N}} \eta_m |x_{m}-\Psi x_{m}^\ell|.
$$
By triangular inequality and Cauchy-Schwarz inequality, we can write
$$
V(x,s)-V^\ell(x^\ell,s) \le   \sum_{m=n}^{\overline{N}} \eta_m \left( |x_{m}-\mathcal{P}^\ell x_{m}|+|\mathcal{P}^\ell x_{m}-\Psi x_{m}^\ell|\right) \le
$$
\begin{equation}
\left(\sum_{m=n}^{\overline{N}} \eta_m^2 \right)^{1/2} \left(   \left( \sum_{m=n}^{\overline{N}} |x_{m}-\mathcal{P}^\ell x_{m}|^2\right)^{1/2}+\left( \sum_{m=n}^{\overline{N}} |\mathcal{P}^\ell x_{m}-\Psi x_{m}^\ell|^2 \right)^{1/2}\right) ,
\label{Vell}
\end{equation}
where $\mathcal{P}^\ell=\Psi^T \Psi$ is a projection operator. Since $\{x_m\}_m \subset \mathcal{T}$, by the definition of POD basis we get

\begin{equation}
\left( \sum_{m=n}^{\overline{N}} |x_{m}-\mathcal{P}^\ell x_{m}|^2\right)^{1/2} \le \left( \sum_{i \ge \ell +1} \sigma_i^2 \right)^{1/2}.
\label{err}
\end{equation}

Let us denote by $Err(\ell)=\left( \sum_{i \ge \ell +1} \sigma_i^2 \right)^{1/2}$ the error related to the orthogonal projection onto $V^\ell$.

Let us focus now on the generic term $|\mathcal{P}^\ell x_{m}-\Psi x_{m}^\ell|$:
$$
|\mathcal{P}^\ell x_{m}-\Psi x_{m}^\ell| \le  \sum_{k=n}^{m-1} \alpha_k  \Vert \mathcal{P}^\ell \Vert_2 | f(x_k, u_*^k,\bar{t}_k)- f(\Psi x^\ell_k, u_*^k,\bar{t}_k) | \le
$$
$$
L_f \Vert \mathcal{P}^\ell\Vert_2  \sum_{k=n}^{m-1} \alpha_k   |x_k-\Psi x^\ell_k | \le L_f \Vert \mathcal{P}^\ell \Vert_2  \sum_{k=n}^{m-1} \alpha_k \left(  |x_k-\mathcal{P}^{\ell} x_k | + |\mathcal{P}^\ell x_k - \Psi x_k^\ell| \right).
$$
By the discrete Gr\"onwall's lemma and noticing that $\Vert \mathcal{P}^\ell \Vert_2 =1$ , we get

\begin{equation*}
|\mathcal{P}^\ell x_{m}-\Psi x_{m}^\ell| \le L_f \sum_{k=n}^{m-1} \alpha_k   |x_k-\mathcal{P}^{\ell} x_k | e^{L_f  (t_m-s) } ,
\label{Pell}
\end{equation*}
and since $\alpha_k \le \Delta t$ $\forall k$, we obtain

\begin{equation}
 \left( \sum_{m=n}^{\overline{N}}  |\mathcal{P}^\ell x_{m}-\Psi x_{m}^\ell|^2 \right)^{1/2} \le \sqrt{T-s} L_f  e^{L_f   (T-s)}  Err(\ell).
\label{Pell2}
\end{equation}

Plugging  \eqref{err} and \eqref{Pell2} into \eqref{Vell} we get
$$
V(x,s)-V^\ell(x^\ell,s) \le Err(\ell) \left(\sum_{m=n}^{\overline{N}} \eta_m^2 \right)^{1/2} \left(  \sqrt{T}  L_f  e^{L_f   T}+1\right) .
$$
Finally, noticing that 
$$
 \sum_{m=n}^{\overline{N}} \eta_m^2  \le (T L_L)^2 + L_g^2,
$$
we obtain
$$
V(x,s)-V^\ell(x^\ell,s) \le C_1(T) Err(\ell),
$$
where
$$
C_1(T)= \left( (T L_L)^2+ L_g^2\right)^{1/2}   \left(  \sqrt{T} L_f  e^{L_f   T}+1\right) .
$$
Analogously, it is possible to obtain the same estimate for $V^\ell(x^\ell,s)-V(x,s)$ and, defining $C(T)=\max \{\widehat{C}(T), C_1(T) \}$, we get the desired result.

\end{proof}

\begin{rmk}

The error estimate presented in Theorem \ref{teoHJBPOD} depends strongly on the initial condition, since the POD reduction is based on the tree generated by the starting point $x$. We can extend the error estimate to other initial conditions if we enlarge the snapshots set with these new data and their evolutions up to the final time $T$.

\end{rmk}

\section{Numerical Tests}\label{sec:tests}

In this section we apply our proposed algorithm to show the effectiveness of the method with two test cases. In the first we deal with a parabolic PDE with a polynomial nonlinear term, which is usually not a trivial task when applying open-loop control tools. The second test concerns the bilinear control of the viscous Burgers' equation. 

In order to obtain the PDEs in the form \eqref{ode}, we use a Finite Difference scheme and we integrate in time using an implicit Euler scheme coupled with the Newton's method with tolerance equal to $10^{-4}$. We will denote by $U_n$ the discretized set of $U$ with $n$ equi-distributed controls.

The numerical simulations reported in this paper are performed on a MacBook Pro with 1CPU Intel Core i7, $2.6$ GHz and 16GB RAM. The codes are written in Matlab R2018b.

\subsection{Test 1: Nonlinear reaction diffusion equation}\label{sec:61}
In the first example we consider the following bidimensional PDE with polynomial nonlinearity and homogeneous Neumann boundary conditions 
\begin{equation}
\begin{cases}
\partial_s y= \sigma \Delta y +\mu\left(y^2-y^3 \right)+  y_0(x)u(s) & (x,s) \in \Omega \times [0,T],
\\
\partial_n y(x,s) =0 & (x,s) \in \partial \Omega \times [0,T], \\
y(x,0)=y_0(x) & x \in \Omega,
\end{cases}
\label{pde1}
\end{equation}
where $y:\Omega\times [0,T]\rightarrow \R$, the control $u(t)$ is taken in the admissible set $\mathcal{U}=\{u:[0,T]\rightarrow [-2,0] \}$ and $\Omega=[0,1]^2$.
In \eqref{pde1} we consider: $
T=1, \sigma=0.1, \mu=5$ and $y_0(x_1,x_2)=sin(\pi x_1)sin(\pi x_2).
$
  We discretize the space domain $\Omega$ in $31$ points in each direction, obtaining a discrete domain with $d=961$ points. As shown in Figure \ref{fig1}, the solution of the uncontrolled equation \eqref{pde1} (i.e. $u(t)\equiv 0$) converges asymptotically to the stable equilibrium $\overline{y}_1(x) =1$.  

\begin{figure}[htbp]	
\centering
	\includegraphics[scale=0.3]{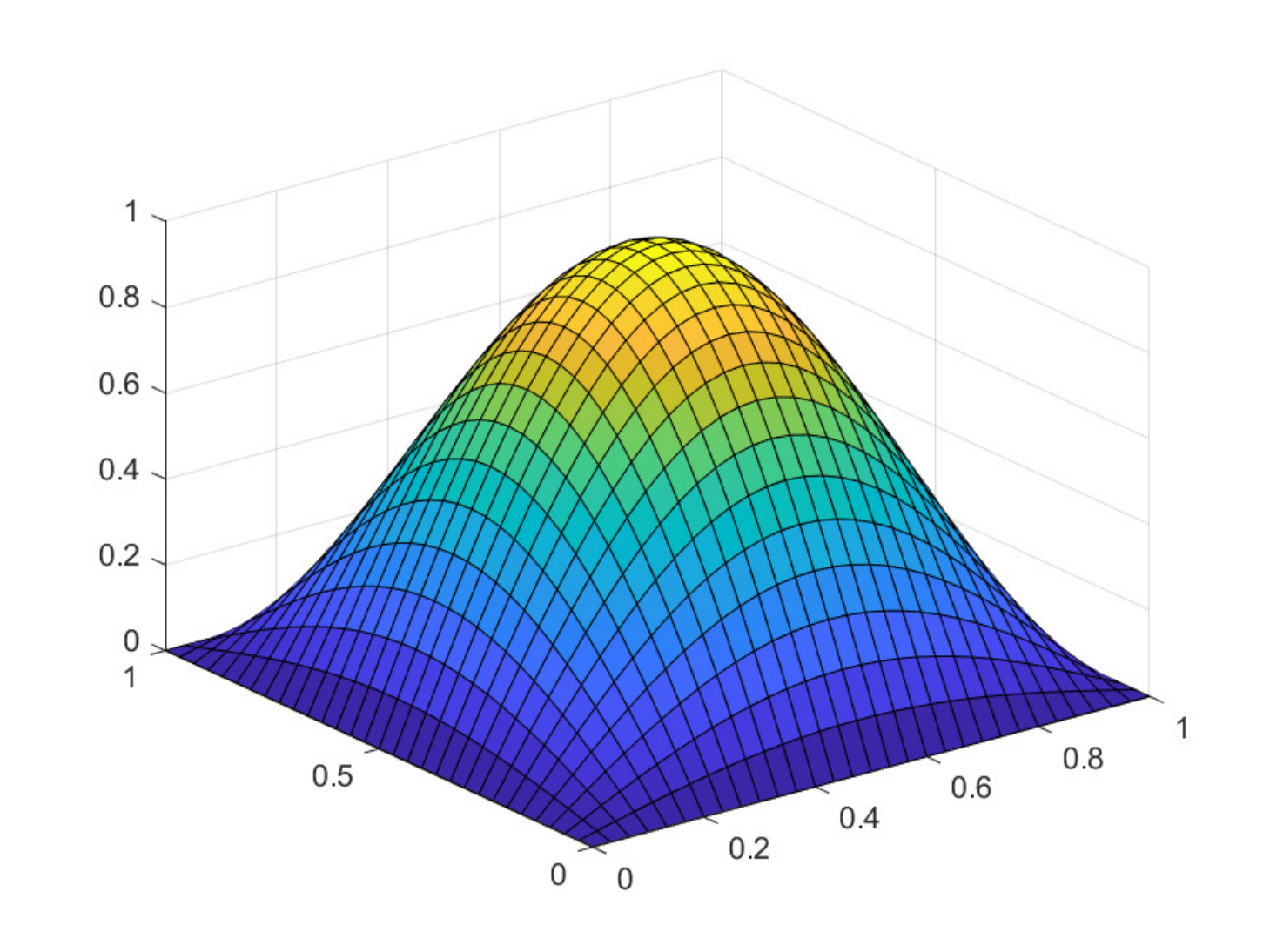}	
	\includegraphics[scale = 0.3]{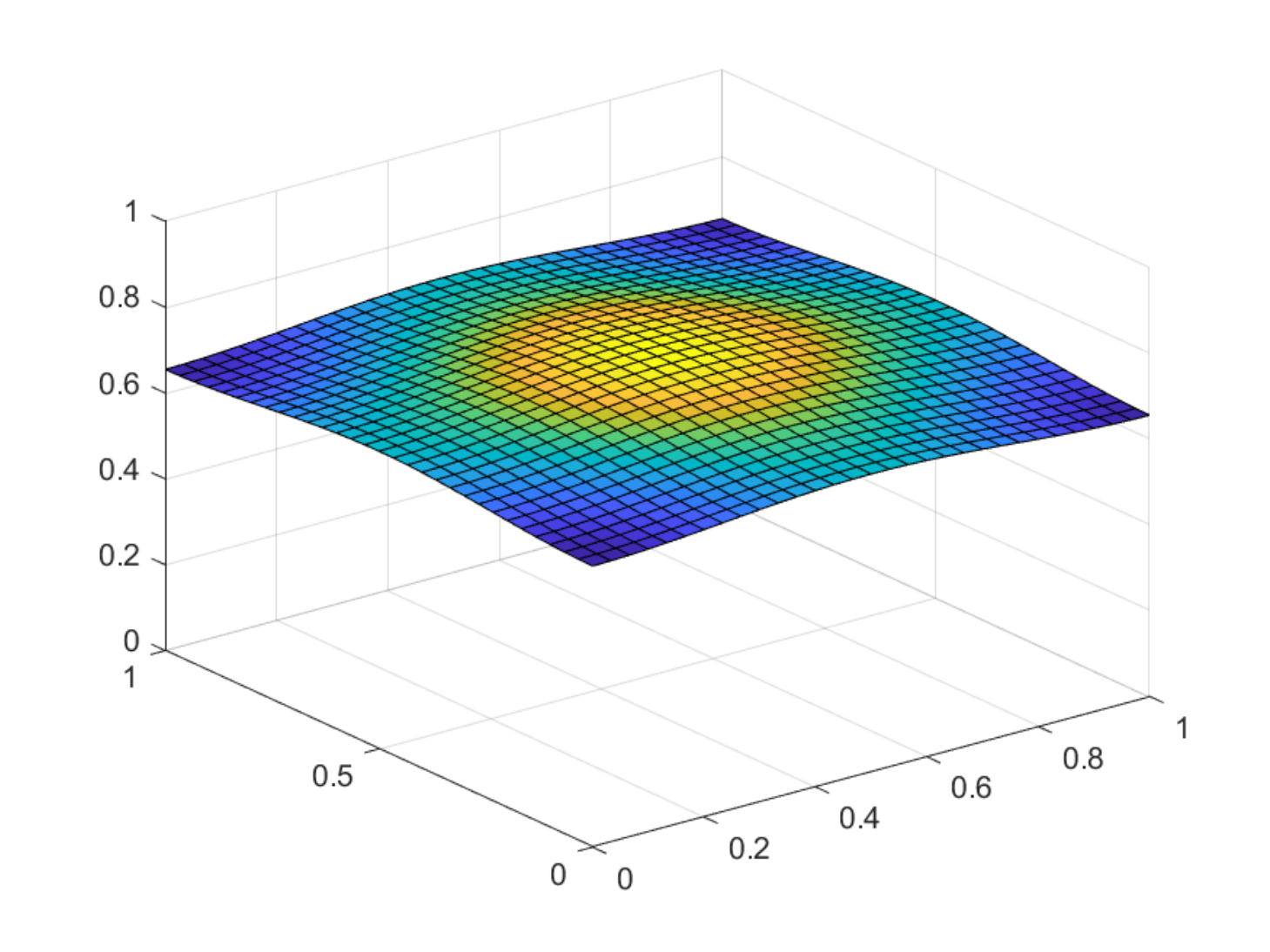}	
	\includegraphics[scale= 0.3]{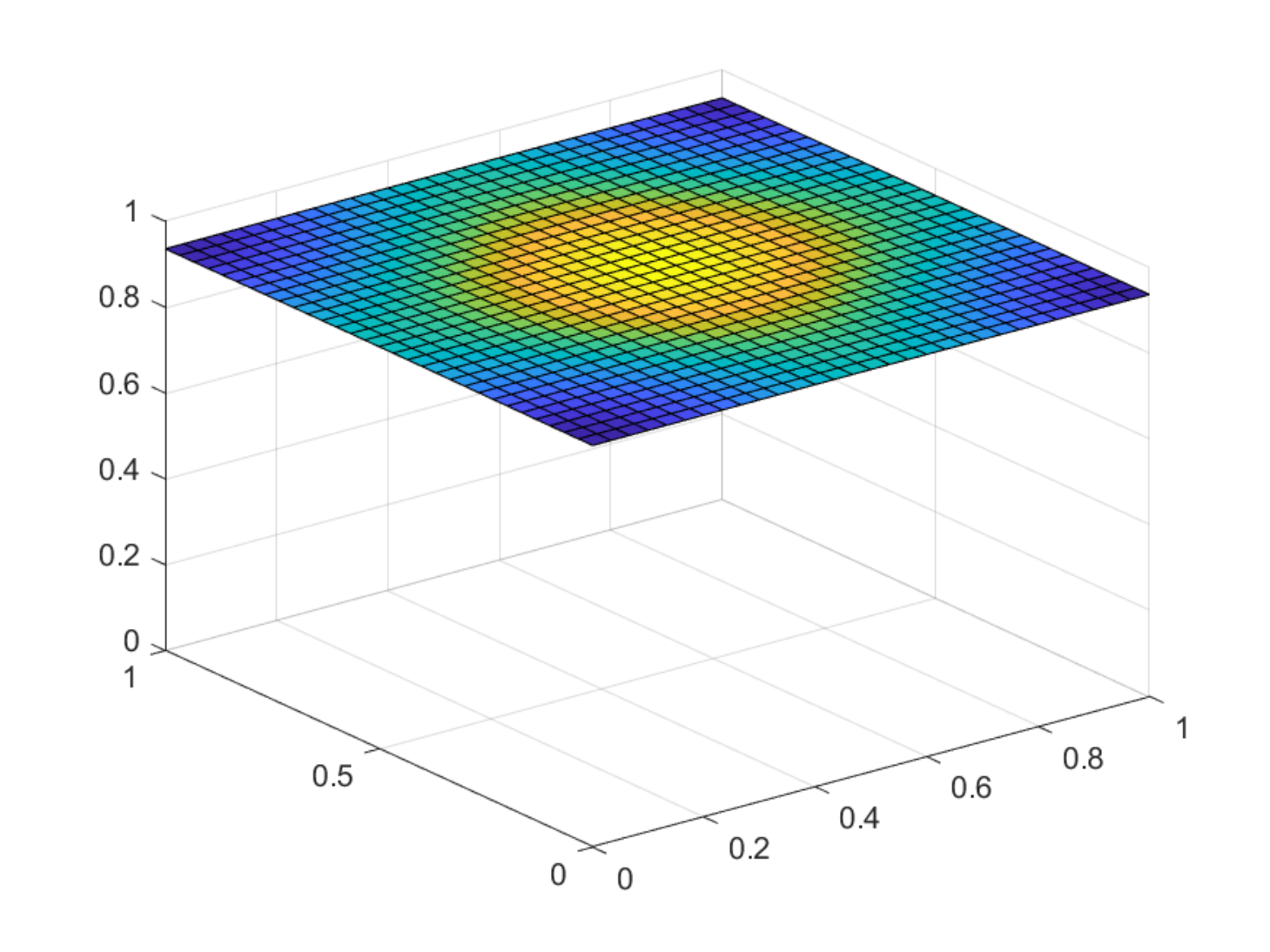}		
\caption{Test 1: Uncontrolled solution for equation \eqref{pde1} for time $t=\{0, 0.5, 1\}$ (from left to right).}
\label{fig1}	
\end{figure}

Our aim is to steer the solution to the unstable equilibrium $\overline{y}_2(x) =0$. For this reason, we introduce the following cost functional
\begin{equation}\label{cost_test}
J_{y_0,t}(u) = \int_t^T \left(  \int_{\Omega} |y(x,s)|^2 dx + \dfrac{1}{100}  |u(s)|^2 \right) ds + \int_{\Omega} |y(x,T)|^2  dx.
\end{equation}

\paragraph{Case 1: Full TSA}

We first consider the results using the TSA without model order reduction. In Figure \ref{fig3:heat} we report the optimal trajectory obtained using the full tree structure algorithm with $2$ controls and $\Delta t=0.1$.
\begin{figure}[htbp]	
\centering
	\includegraphics[scale=0.3]{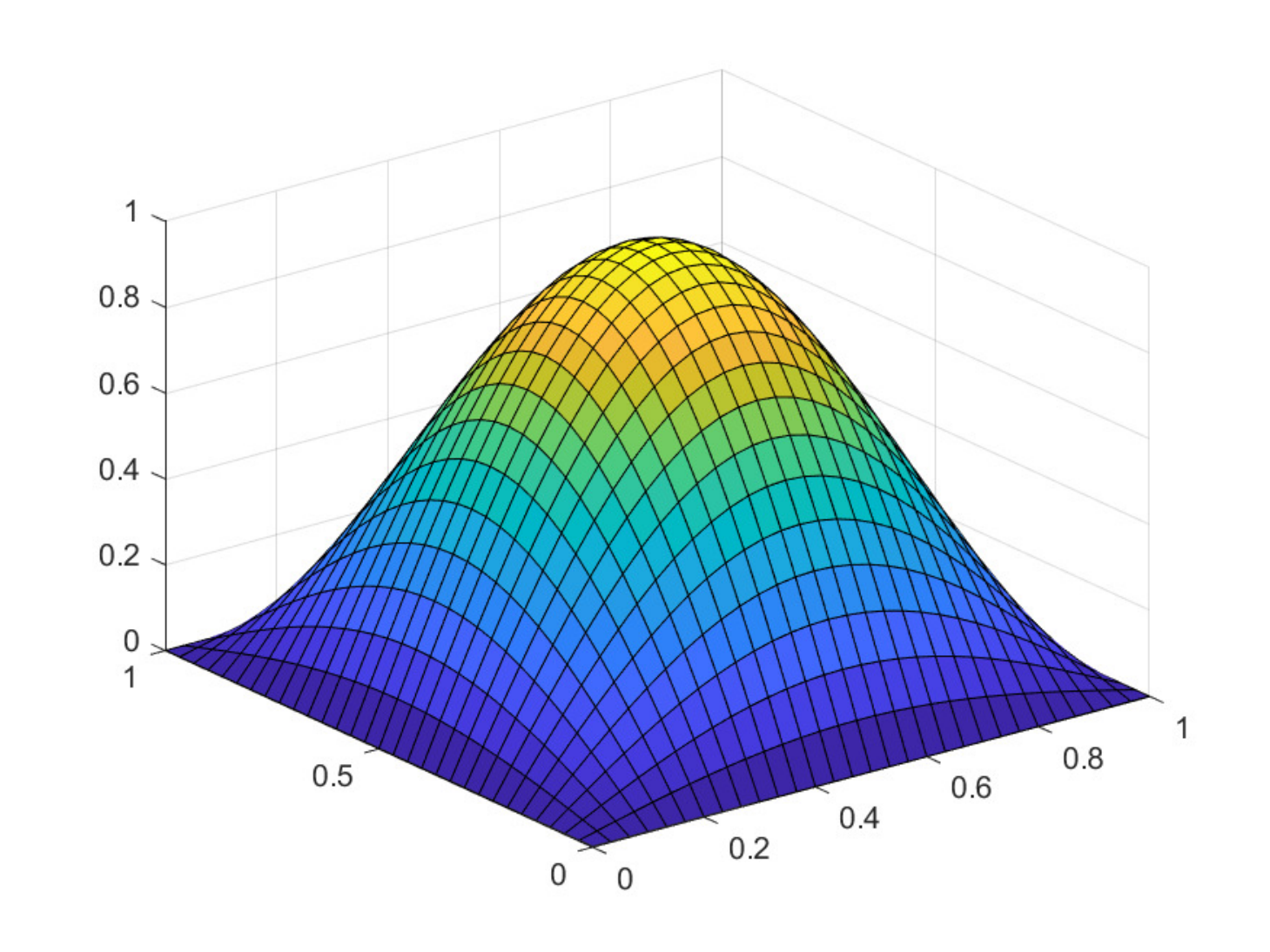}	
	\includegraphics[scale = 0.3]{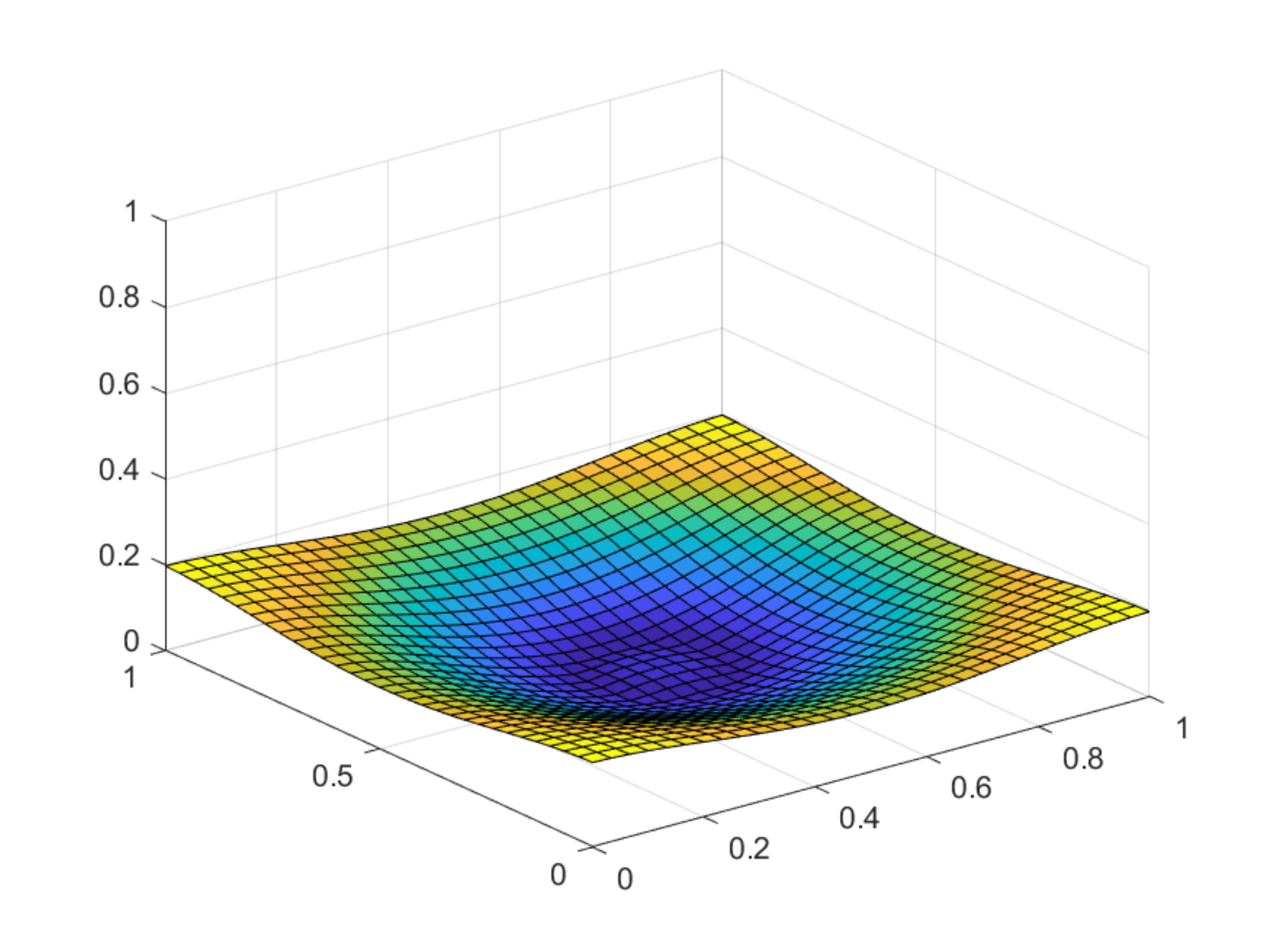}	
	\includegraphics[scale= 0.3]{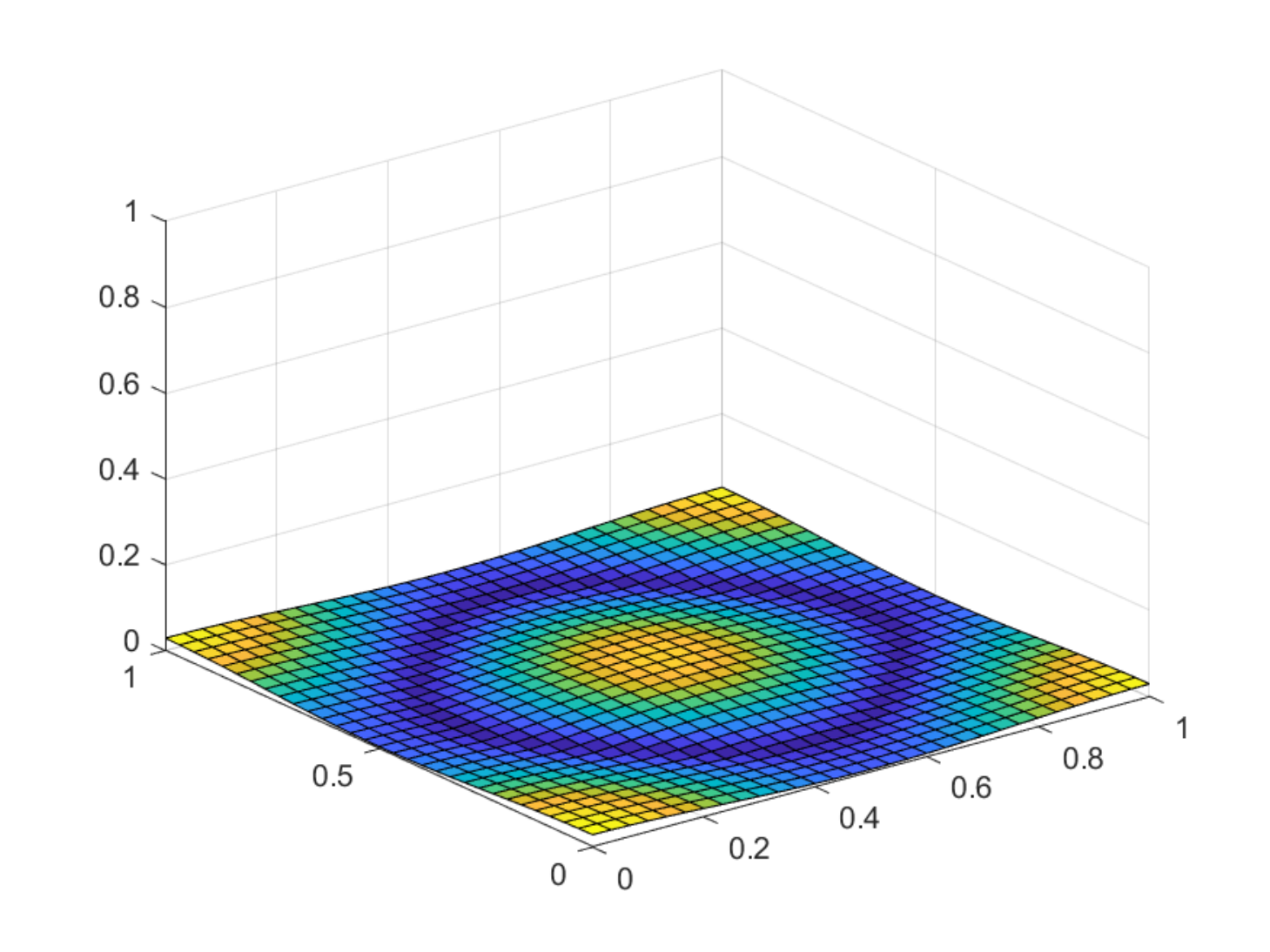}		
\caption{Test 1: Controlled solution with TSA for equation \eqref{pde1} with full tree for time $t=\{0, 0.5, 1\}$ (from left to right) with $U_2$.}	
\label{fig3:heat}
\end{figure}
As one can see, we steer the solution to the unstable equilibrium using $U_2=\{-2,0\}$ as discrete control set. For the given tolerance $\ep =\Delta t^2=0.01$, the cardinality of the pruned tree with $3$ controls is $84354$, whereas without is $88573$.

In the left panel of Figure \ref{fig3}, we show the control policy obtained with $2$, $3$ and $4$ discrete controls. In the right panel we show the behaviour of the cost functional, and it is easy to check that the optimal trajectories are very similar. An analysis of the CPU time is provided in Table \ref{tab1:cpu} and discussed below.

\begin{figure}[htbp]	
\centering
	\includegraphics[scale=0.4]{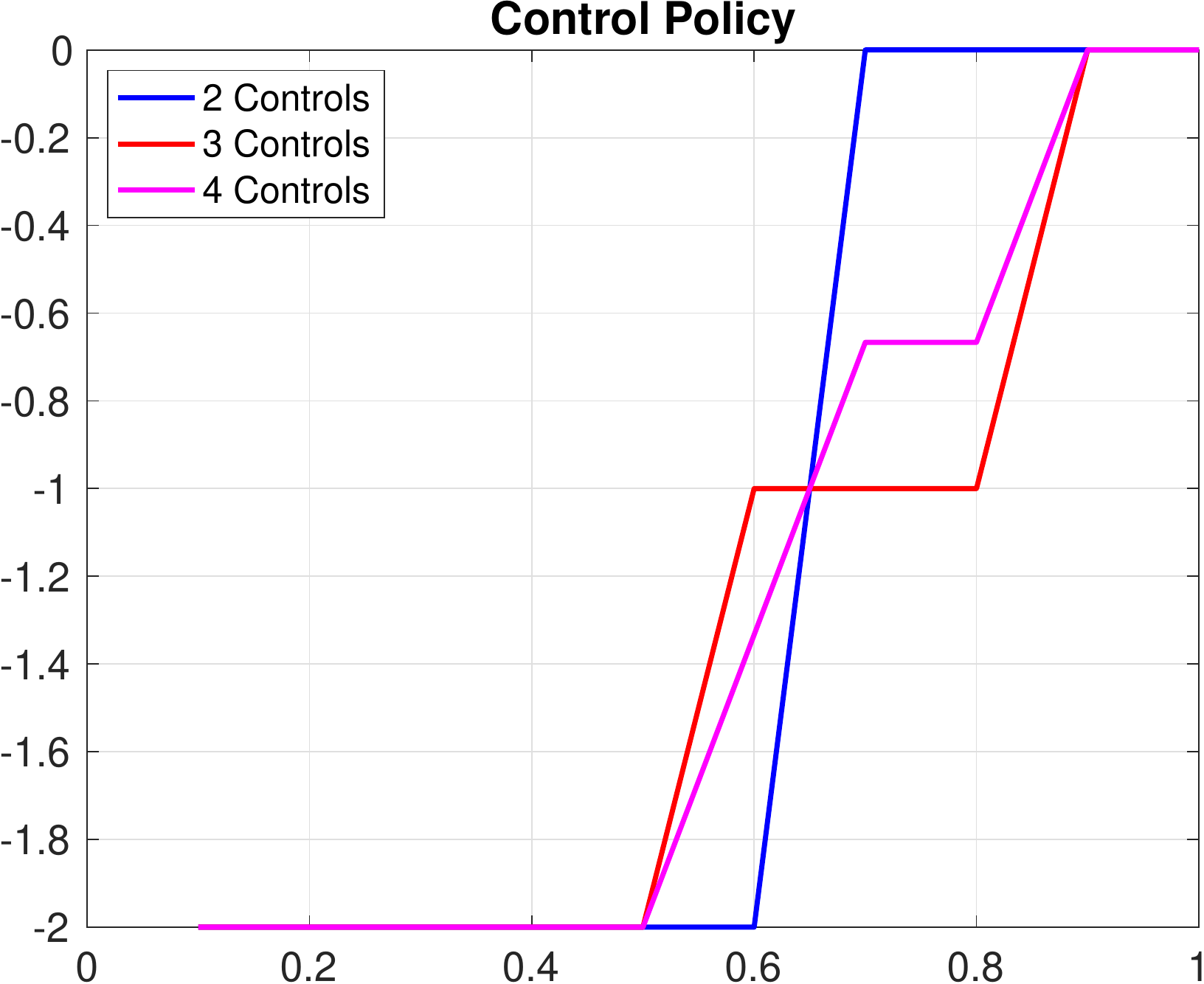}	
	\includegraphics[scale=0.4]{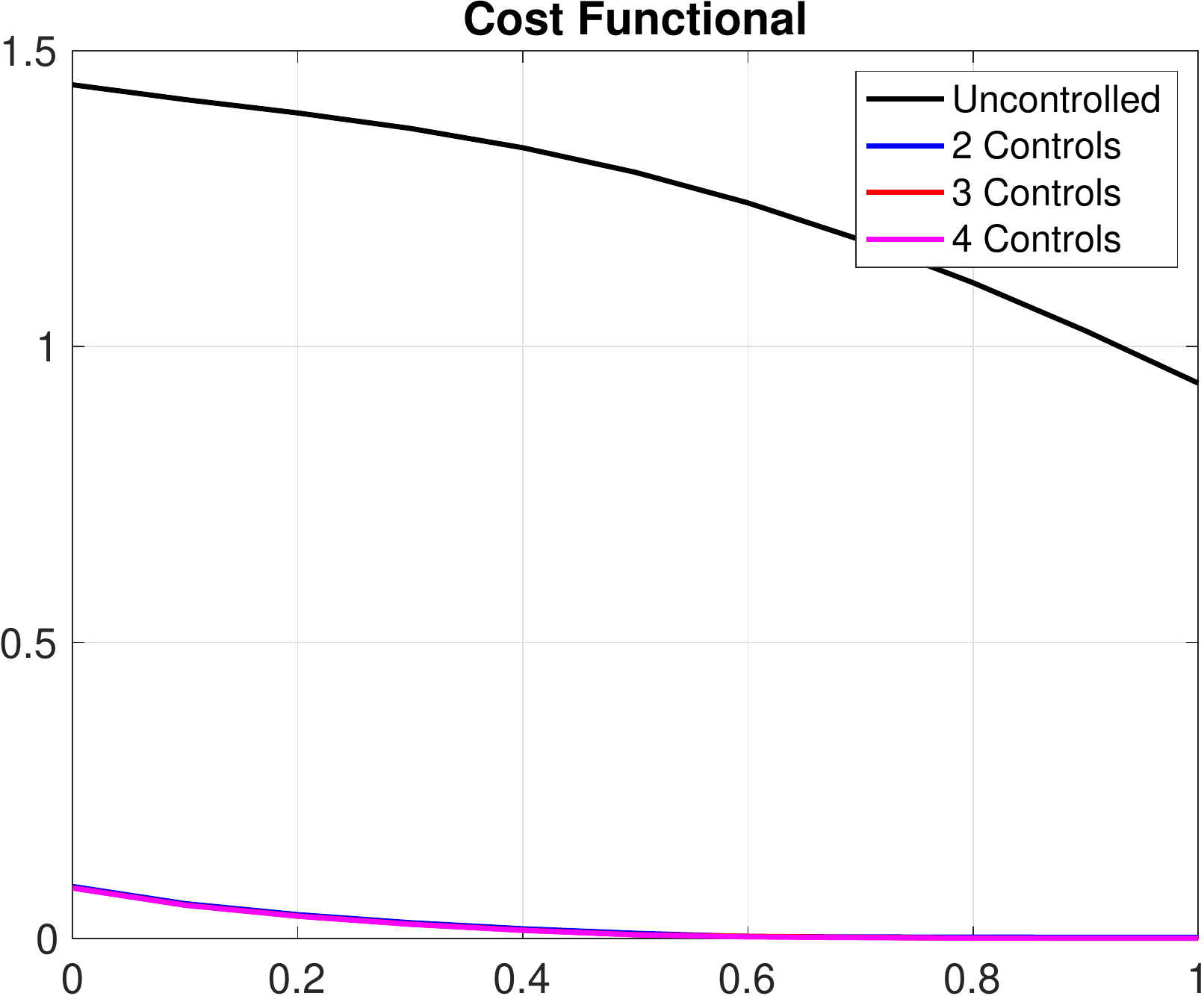}	
\caption{Test 1: Control policy (left) and cost functional (right) for $U_2$, $U_3$ and $U_4$.}	
\label{fig3}
\end{figure}

\paragraph{Case 2: TSA with POD}

The computation of the full TSA is already expensive with only $3$ controls. For this reason, we replace the dynamics with its reduced order modeling. 
Then, we set the number of POD basis $\ell=6$ such that $\mathcal{E}(\ell)=0.999$. Similarly, we consider $6$ DEIM basis for the nonlinear term. In what follows, whenever we will talk about POD, we will refer to POD-DEIM approach.

 The snapshots matrix $Y$ is computed with a full TSA using the discrete control space $U_2$ and $\Delta t=0.1$. In the online stage we considered again $\Delta t=0.1$, a pruning criteria with $\ep =\Delta t^2$ and different discrete controls. 

In Figure \ref{fig:err}, we present the relative error with the euclidean norm between the model order reduction approximation and the full tree for $\Delta t = 0.1$ and $3$ controls. The snapshots in this example were computed with $\Delta t = 0.1$ and $2$ controls. We can observe that the approximation of the tree is rather accurate as the number of the POD basis $\ell$ increases.
\begin{figure}
\centering
\includegraphics[scale=0.4]{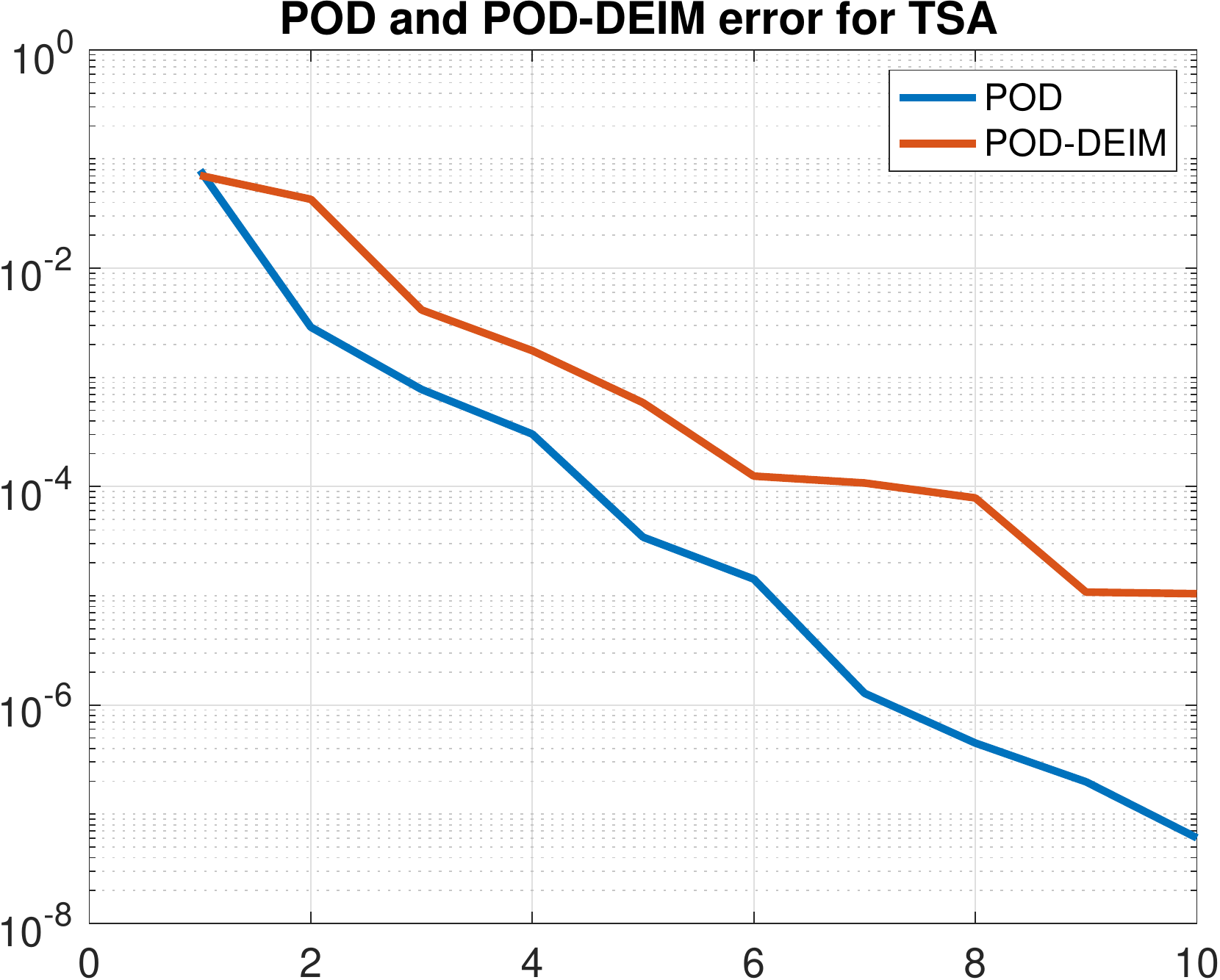}
\caption{\second{Relative euclidean error for the POD and POD-DEIM approximation of the tree. The snapshots are computed with $\Delta t = 0.1$ and $2$ controls, whereas the online stage refers to $\Delta t=0.1$ and $3$ controls. The $x$-axis refers to the number of POD (POD-DEIM) basis}.}
\label{fig:err}
\end{figure}

In the left panel of Figure \ref{fig4} we show the optimal policy with a number of controls varying from two to five. As one can see comparing the left panels of Figure \ref{fig3} and Figure \ref{fig4}, there is no difference in terms of optimal control between the high dimensional case discretized with Finite Difference and the low dimensional case obtained via POD. We remind that the optimal trajectory is obtained plugging the suboptimal control $u^\ell_*$ into the high dimensional model. Finally, in the right panel of Figure \ref{fig4} we show a zoom of the cost functional $J_{y_0,0}$ and it is possible to see the improvement obtained using more controls.


\begin{figure}[htbp]	
\centering
	\includegraphics[scale=0.4]{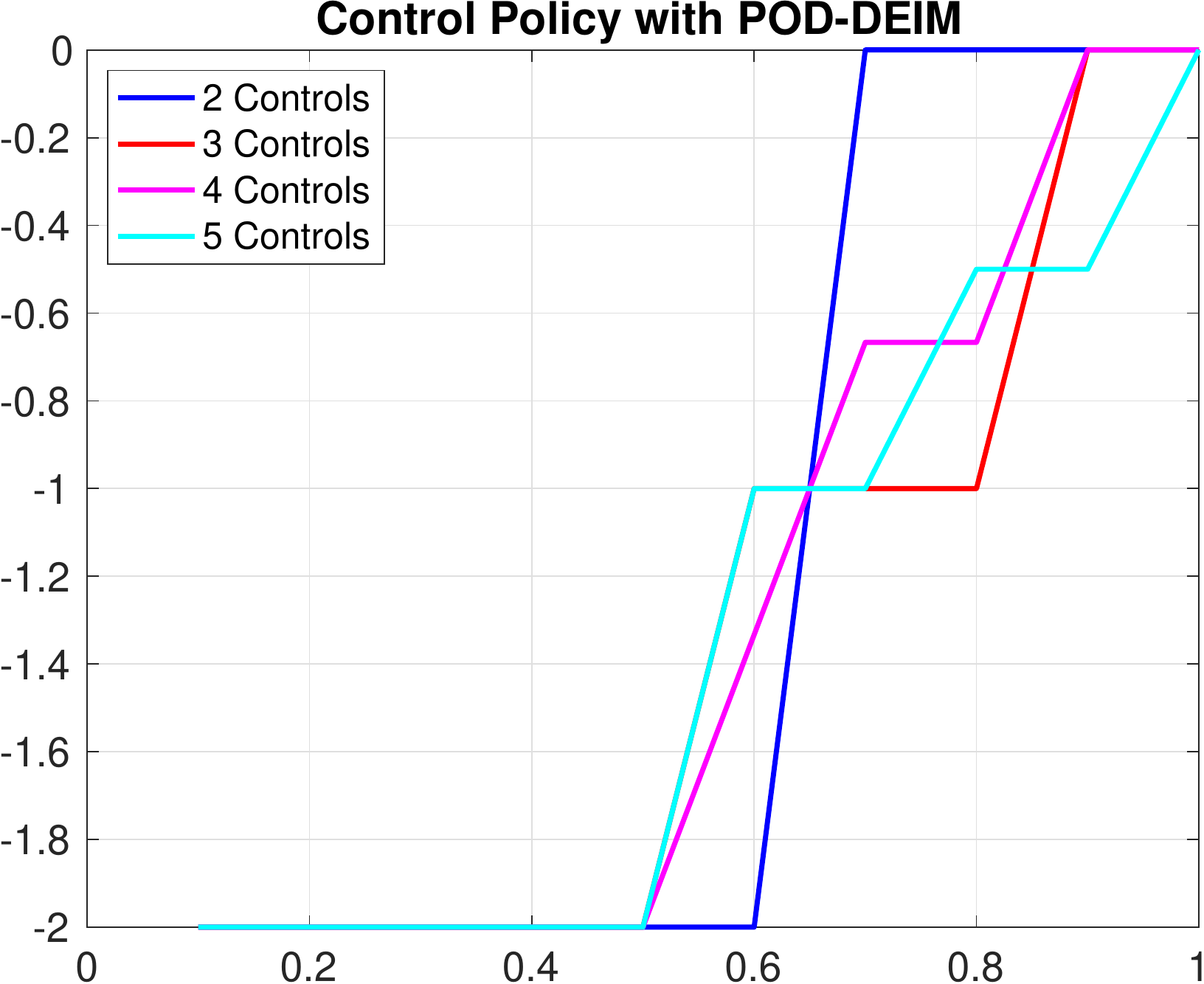}	
	\includegraphics[scale=0.4]{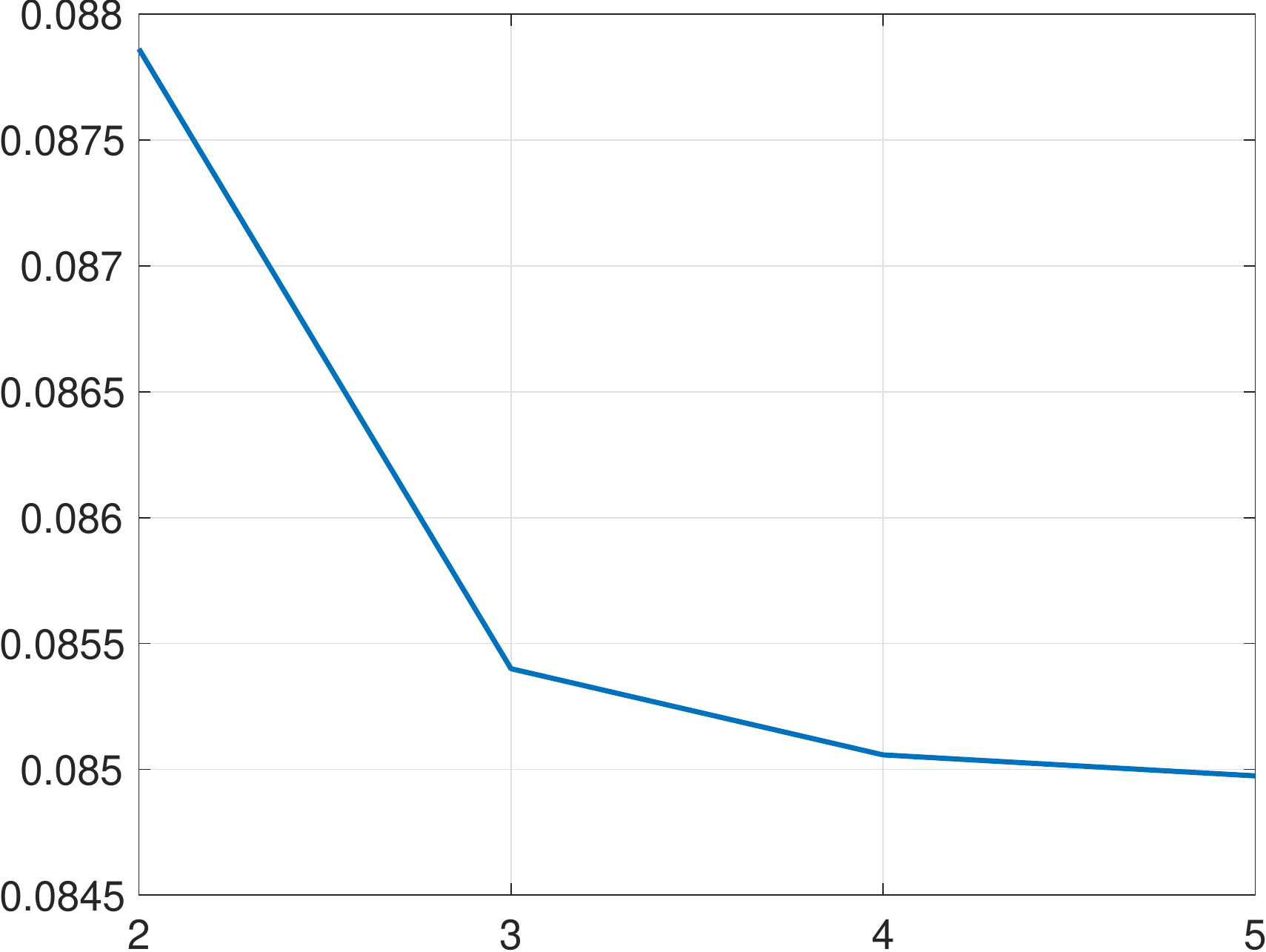}
\caption{Test 1: Optimal policy (left) and $J_{y_0,0}$ (right) for $U_n$ with $n=\{2,3,4,5\}$.}	
\label{fig4}
\end{figure}

The CPU time, expressed in seconds, is shown in Table \ref{tab1:cpu}. The online phase of the TSA-POD is always faster than the full TSA. We tried to compute the full TSA with $5$ controls and we stopped the computation after $4$ days. If we also consider the amount of time to compute the snapshots, the offline phase, using the TSA with 2 controls and then running online, e.g. the TSA-POD with 3 controls, we get a speed up of factor $10$ with respect to the full problem, having the same approximation.
\begin{table}[H]
\centering
\begin{tabular}{c c c c c }
& $U_2$ & $U_3$ & $U_4$ & $U_5$\\
\hline
TSA & $5.8312s$  & $241.5773$s & $3845.77$s & $>4$ days\\
TSA-POD & $0.5157$s &  $19.7969$s &$432.0990$s & $1.0871e+04$s\\
\hline
\end{tabular}
\caption{CPU time of the TSA and  the TSA-POD with a different number of controls and pruning criteria $\ep = 0.01$.}
\label{tab1:cpu}
\end{table}

%

\begin{rmk}
The offline stage of the proposed method is clearly expensive due to the cardinality of the tree. We have also tried to compute snapshots for some given control input setting, e.g. $u(t)\equiv \overline{u}$, with $\overline{u} \in \{-2,-1,0\}$. In this setting we are able to achieve the same results shown in the section, improving the computational performances of the method in the offline phase.
\end{rmk}

\begin{rmk}
Using the same set of snapshots, we can perform the online simulation with $\Delta t=0.05$ and $U_2$. The results for the optimal control and cost functional can be found in Figure \ref{fig5}.

\begin{figure}[htbp]	
\centering
	\includegraphics[scale=0.4]{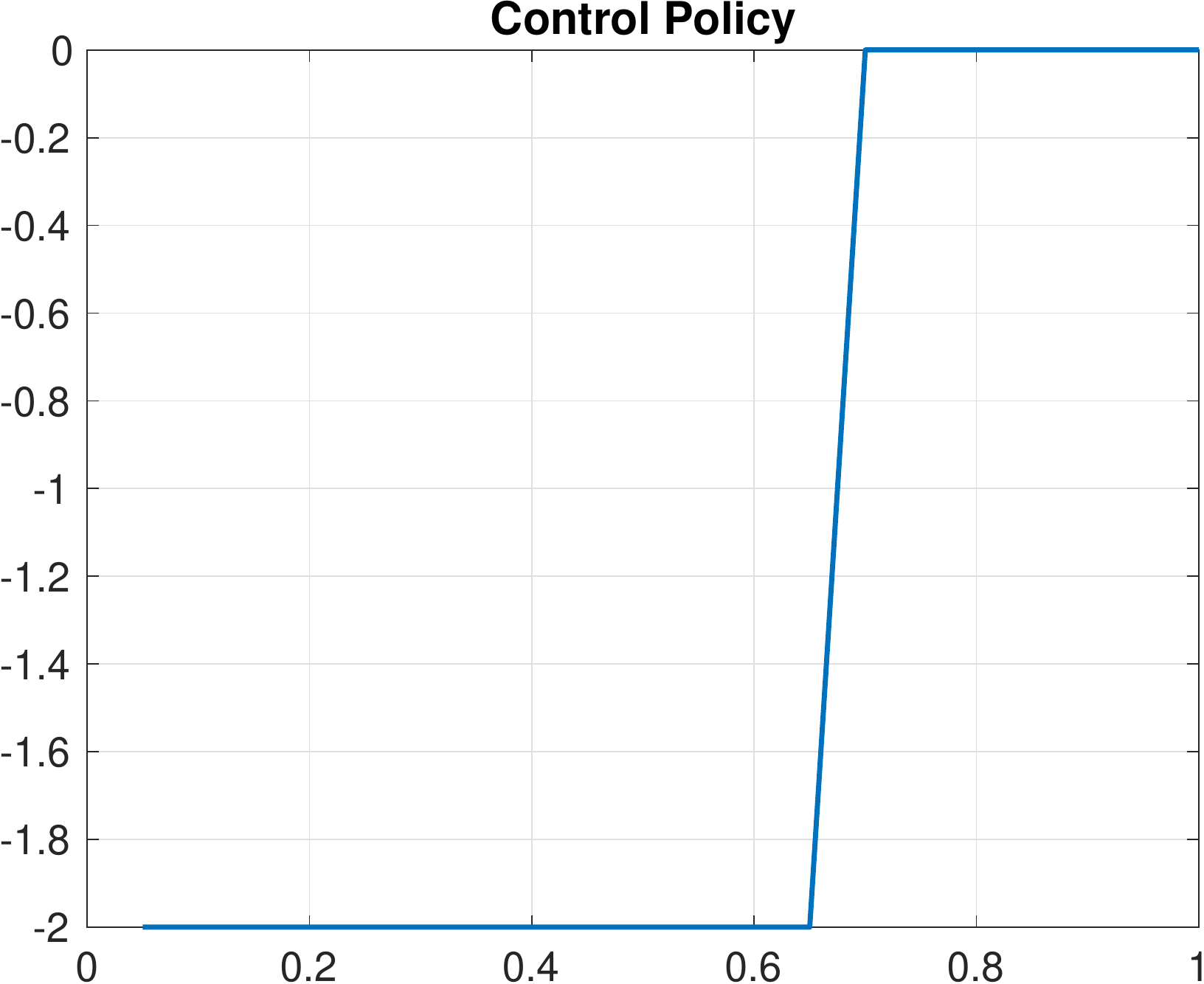}	
	\includegraphics[scale=0.4]{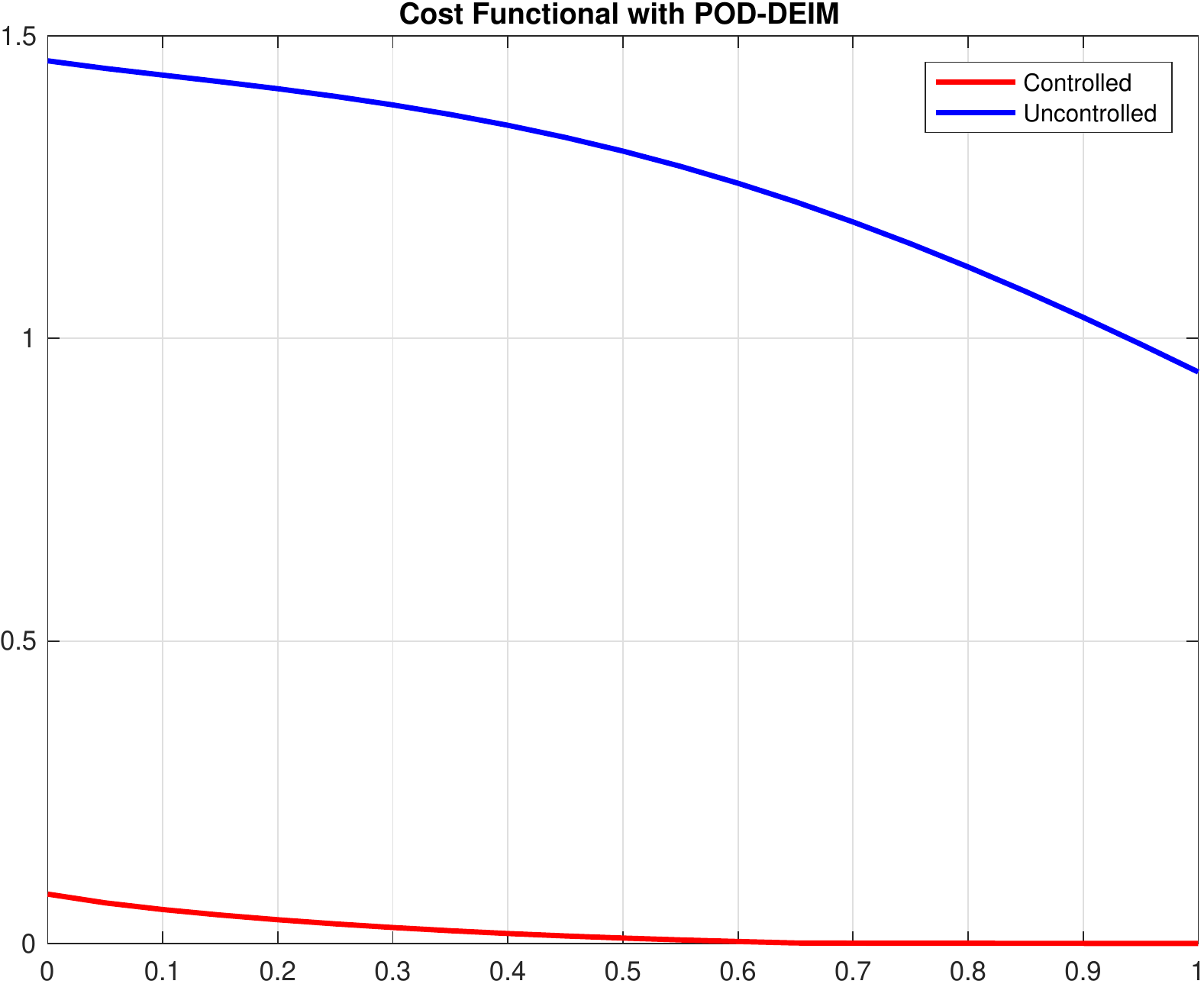}	
\caption{Test 1: Optimal policy (left) and cost functional (right) with $\Delta t=0.05$ and $U_2$.}	
\label{fig5}
\end{figure}

\end{rmk}

\subsection{Test 2: Viscous Burgers' equation}
In the second example we consider the well-known viscous Burgers' equation with homogeneous Dirichlet boundary conditions: 
\begin{equation}
\begin{cases}
\partial_s y(x,s)= \sigma \Delta y(x,s) +y(x,s) \cdot \nabla y(x,s)  +  y(x,s)u(s) & (x,s) \in \Omega \times [0,T],
\\
 y(x,s) =0 & (x,s) \in \partial \Omega \times [0,T], \\
y(x,0)=y_0(x) & x \in \Omega,
\end{cases}
\label{pde2}
\end{equation}
where the control $u(t)$ is taken in the admissible set $\mathcal{U}=\{u:[0,T]\rightarrow [-2,0] \}$ and $\Omega=[0,1]^2$.
In \eqref{pde2} we consider: $T=1, \sigma=0.01$ and $y_0(x_1,x_2)=sin(\pi x_1)sin(\pi x_2).$ We discretize the space domain in $41$ points in each direction, obtaining a problem of dimension $d=1681$ points.
In Figure \ref{fig2pde} we show the solution of the uncontrolled equation \eqref{pde2} for different time instances. Our aim is to steer the solution to the steady state $\tilde{y}(x)=0$, using the cost functional \eqref{cost_test}, as in Test 1, using a bilinear control, e.g. controlling the system through a reaction term.

\begin{figure}[htbp]	
\centering
\centering
	\includegraphics[scale=0.22]{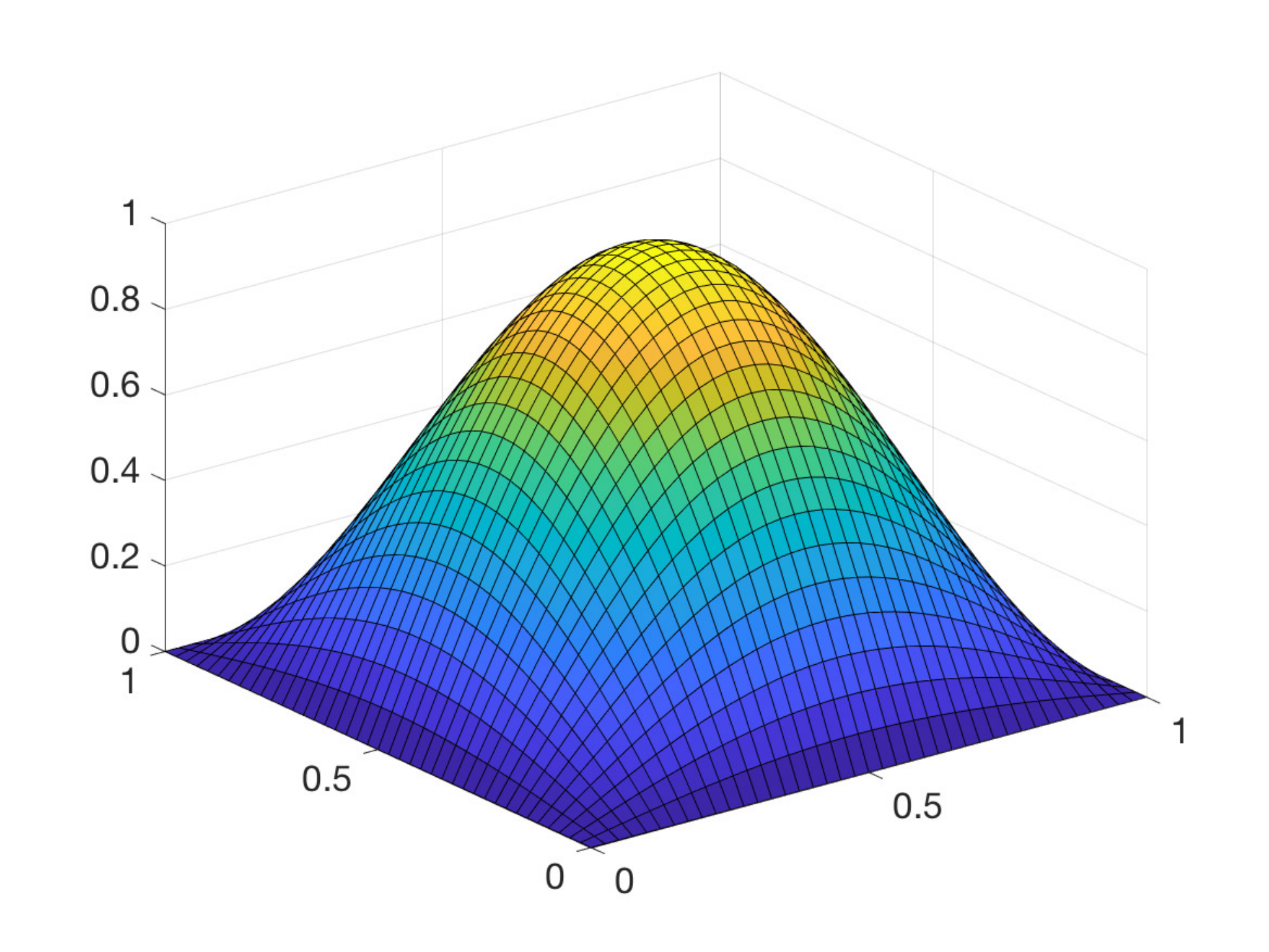}	
	\includegraphics[scale=0.22]{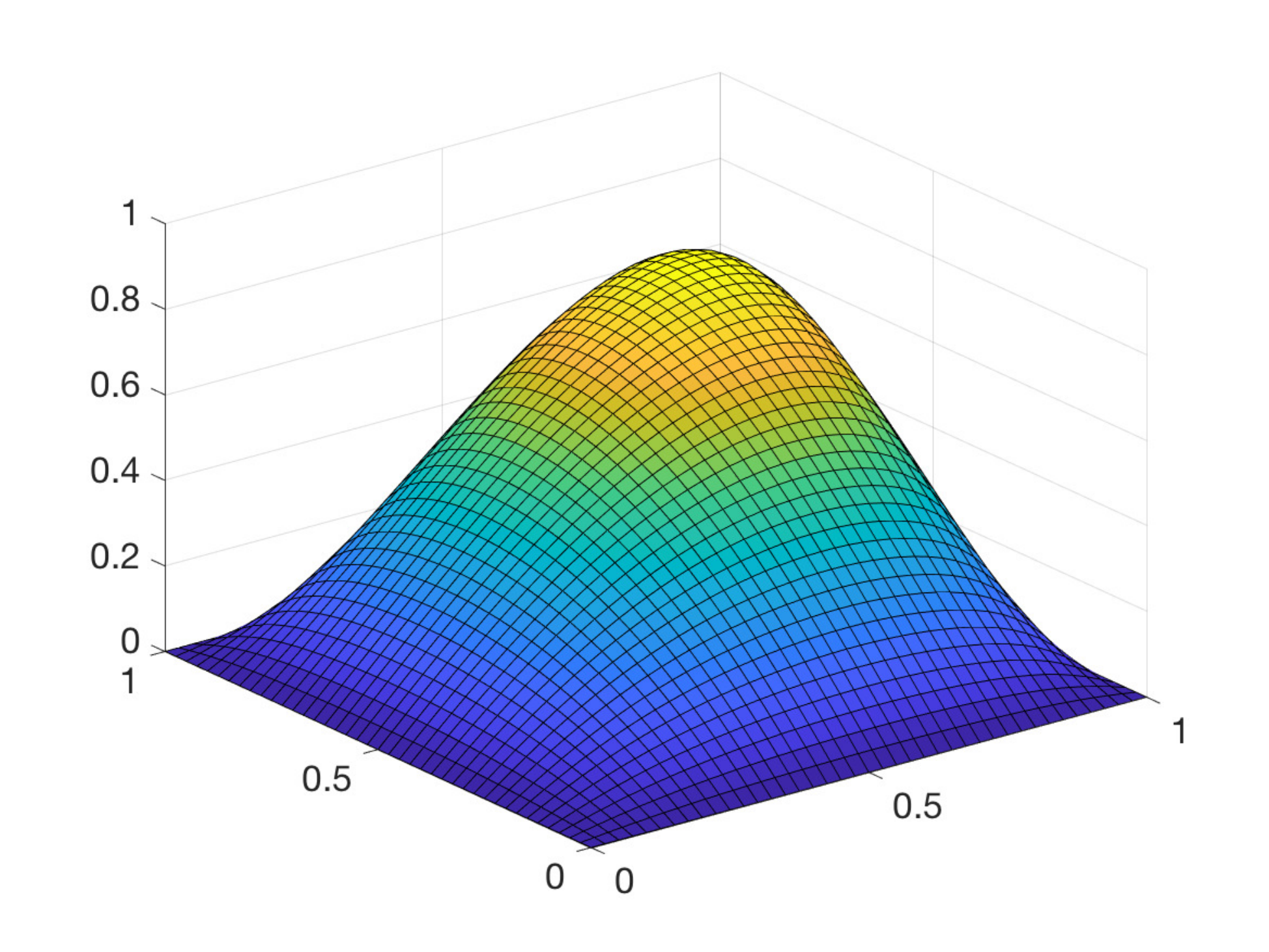}
	\includegraphics[scale=0.22]{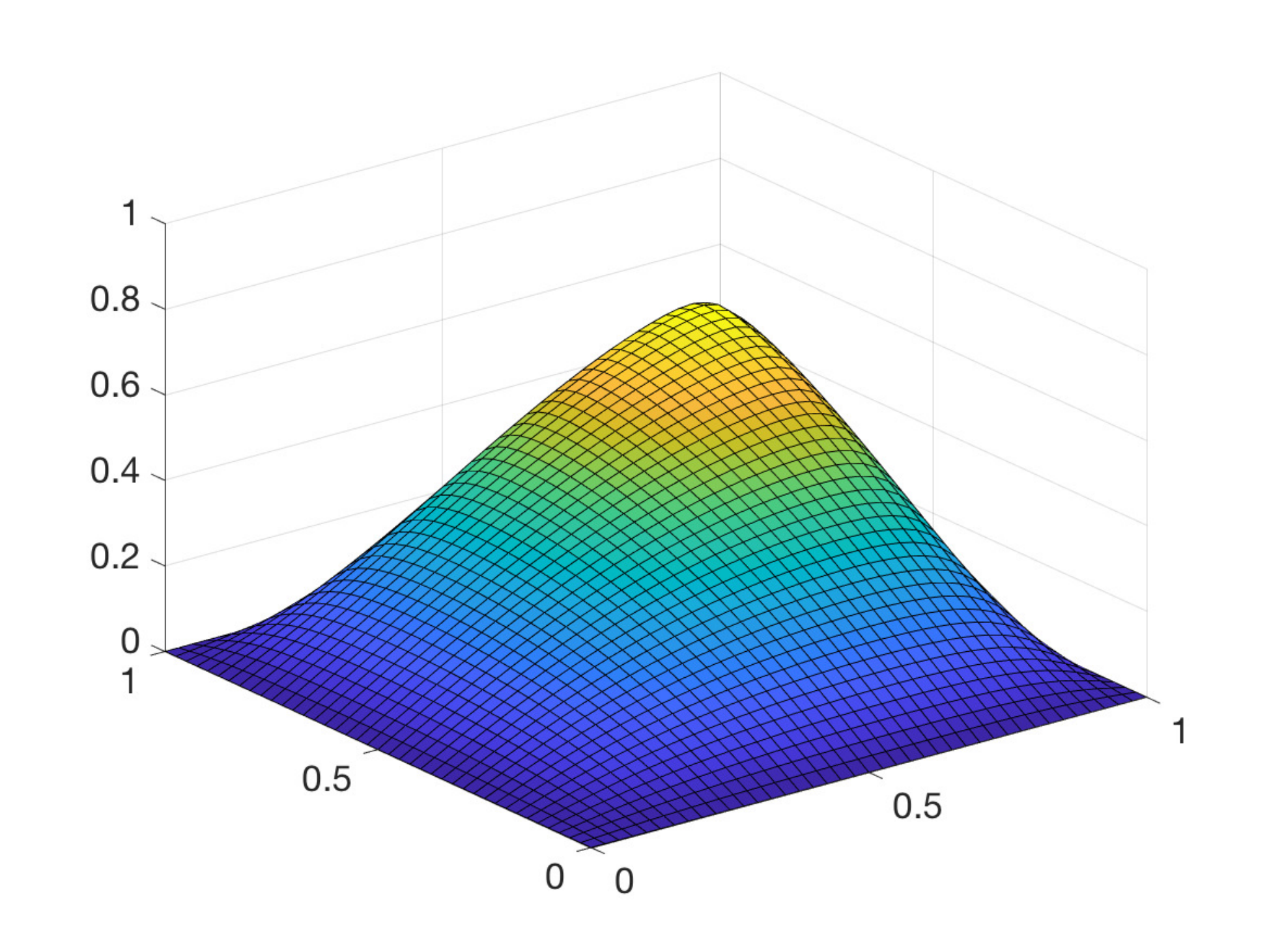}
\caption{Test 2: Uncontrolled solution for equation \eqref{pde2} for time instances $t=\{0, 0.5, 1\}$ (from left to right).}
\label{fig2pde}	
\end{figure}

\paragraph{Case 1: Full TSA} Let us first consider the results of the full TSA. In Figure \ref{fig2pde_c} we show the results of the controlled problem. As we can see, the solution gets close to $\tilde{y}(x)$ as expected. We also note that for this example the viscosity term $\sigma$ is rather low, making the problem hard to be controlled.

\begin{figure}[htbp]	
\centering
\centering
	\includegraphics[scale=0.22]{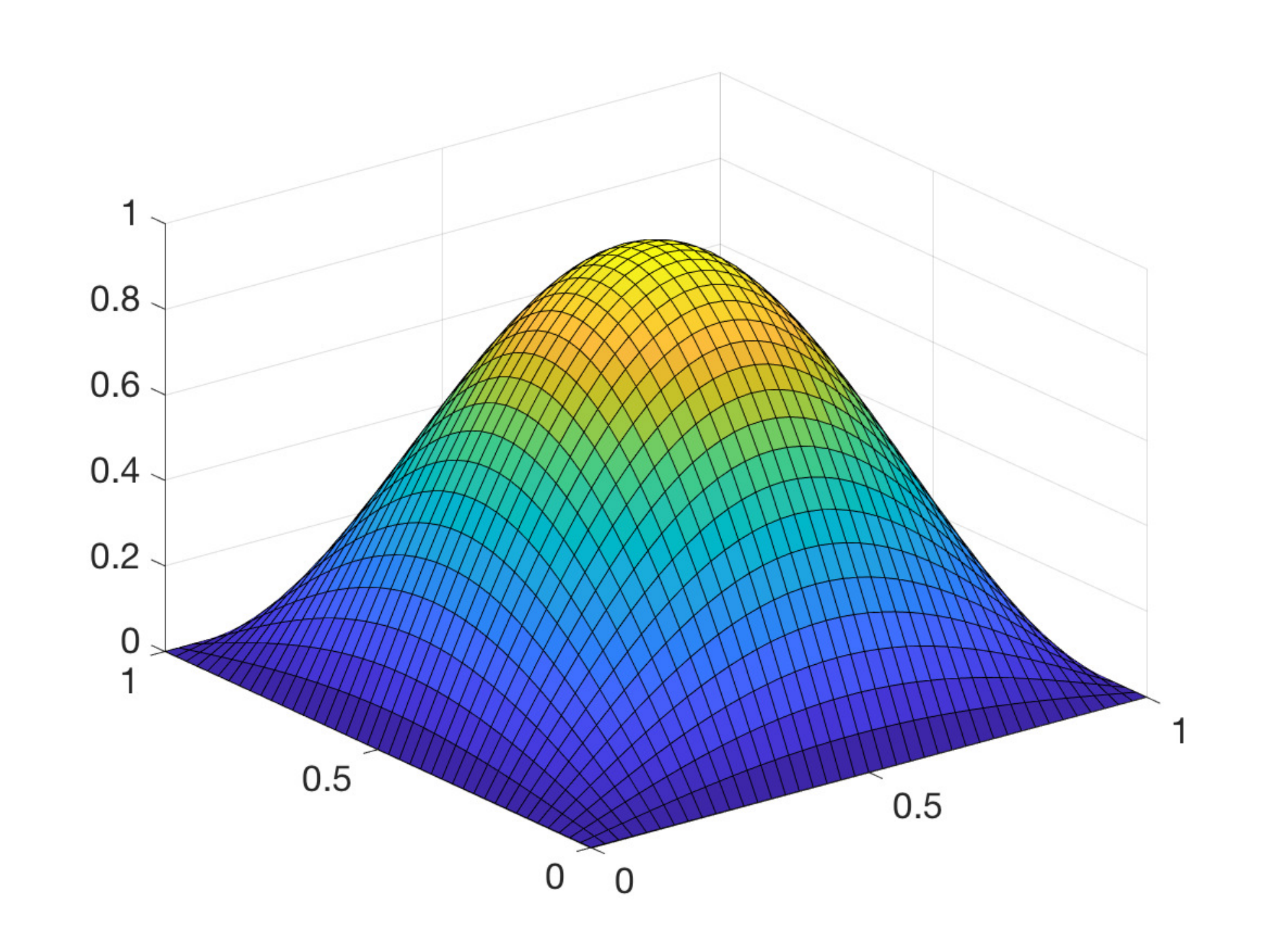}	
	\includegraphics[scale=0.22]{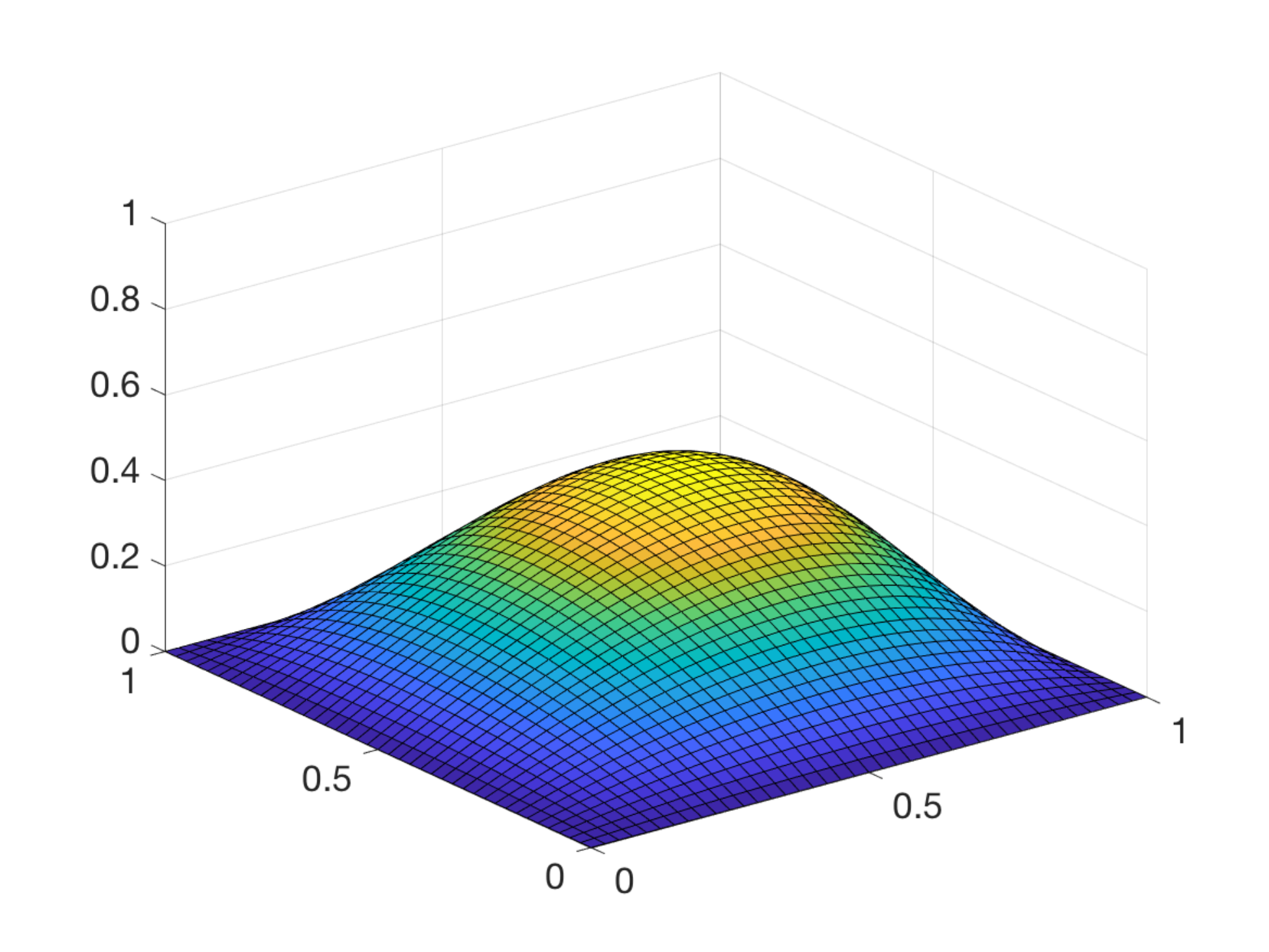}
		\includegraphics[scale=0.22]{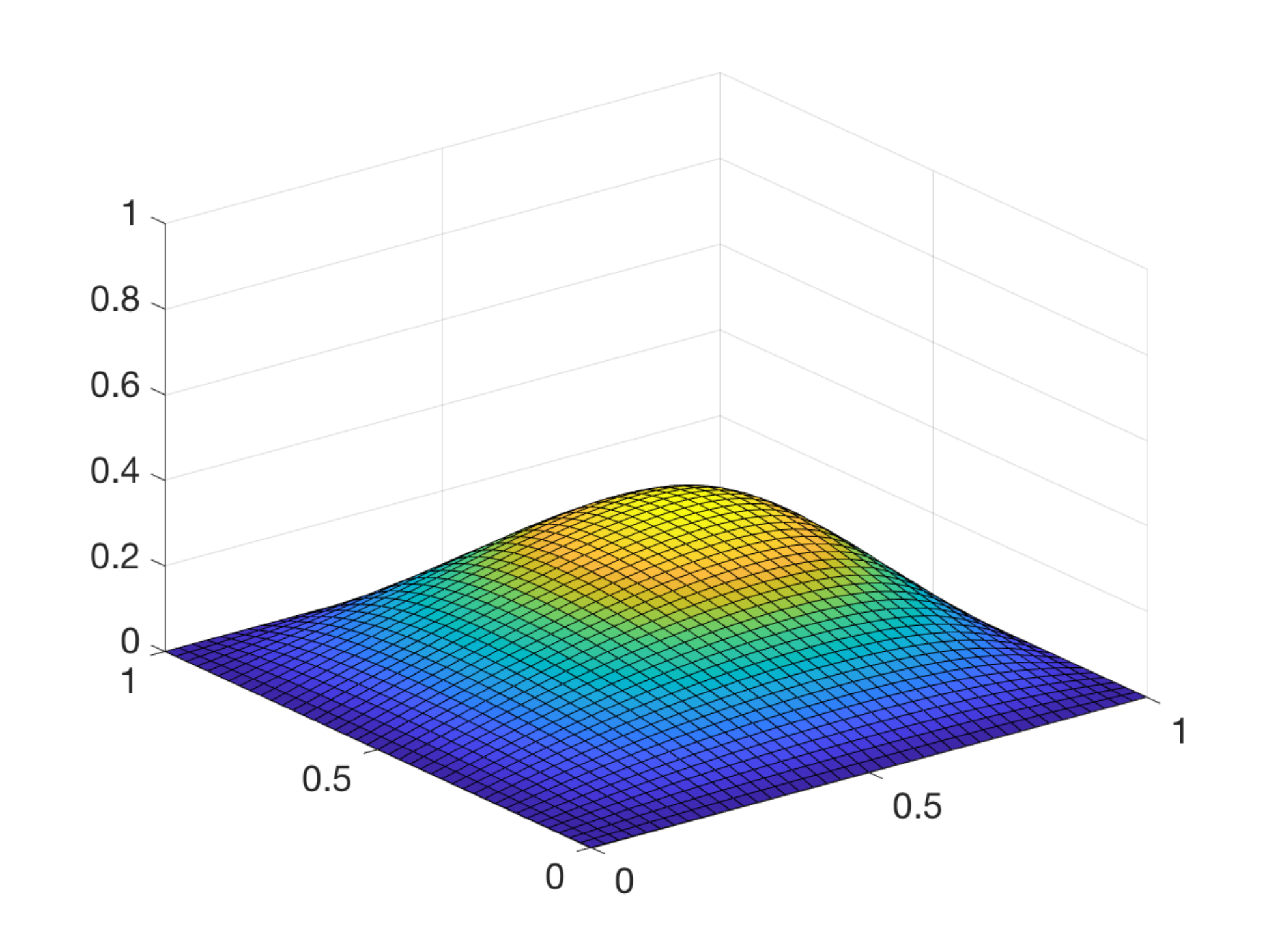}
\caption{Test 2: Controlled solution with $3$ controls for equation \eqref{pde2} with full tree for time $t=\{0, 0.5, 1\}$ (from left to right).}
\label{fig2pde_c}	
\end{figure}

In the left panel of Figure \ref{fig3_2}, we show the optimal control computed to obtain the controlled solution. When the control set is only given by $2$ controls, the algorithm uses the control $u^*(t)=-2$ for $0\leq t \leq 0.7$ and $u(t) = 0$ for $0.7 <t \leq 1$, whereas with $3$ controls we use the control $-2$ for  $0\leq t \leq 0.5$ and $-1$ for $0.5 <t \leq 1$.

We can see that passing from $2$ to $3$ controls, we obtain a slightly better result in terms of cost functional (see 
the right panel of Figure \ref{fig3_2}).
\begin{figure}[htbp]
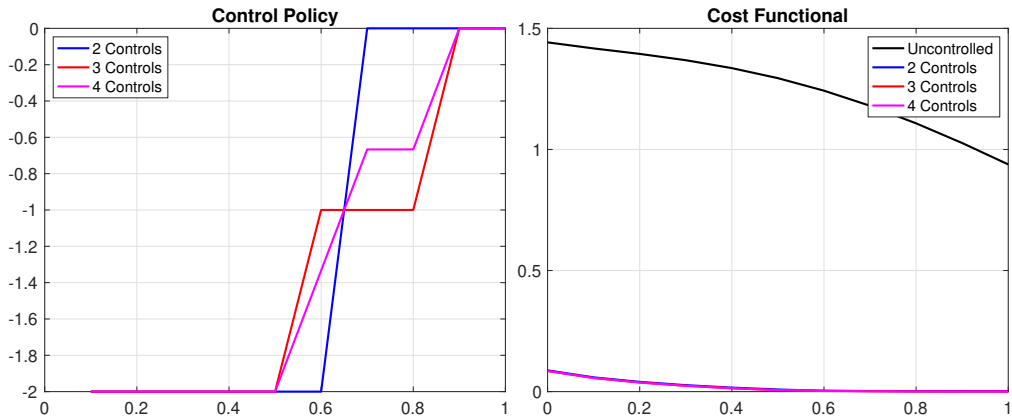
	
\centering
	\includegraphics[scale=0.4]{Pic_contr_full.pdf}	
	\includegraphics[scale=0.4]{Pic_cost_full.pdf}	
\caption{Test 2: Optimal policy (left) and cost functional for $U_2$ and $U_3$ (right).}	
\label{fig3_2}
\end{figure}


Finally, the cardinality of the full tree is reported in Table \ref{tab2:card}. We can observe that the tree is considerably pruned compared to the previous example. This happens when we deal with a bilinear control for both Test 1 and Test 2.

\begin{table}[htbp]
\centering
\begin{tabular}{c c c c c }
& $U_2$ & $U_3$  & $U_4$ & $U_5$ \\
\hline
TSA with $\ep=0$ & $2047$  & $88573$ &  &\\
TSA with $\ep=0.01$ & $1681$ &  $17680$ & & \\
TSA-POD with $\ep=0.01$ & $1717$ & $17627$ & $48372$ &   $83201$\\
\hline
\end{tabular}
\caption{Test 2: Cardinality of the tree for the full TSA and for the pruned TSA and pruned TSA-POD with $\ep =  0.01$, varying the control sets.}
\label{tab2:card}
\end{table}

\paragraph{Case 2: TSA with POD} To accelerate and use a finer control set, we use model order reduction. The snapshots are computed with $\Delta t=0.1$, $U_2$ and a pruning criteria with $\ep=\Delta t^2$. For this problem, we only project the dynamics with POD since the nonlinear term can be written as a tensor and can be projected offline. We took $\ell=8$ POD basis to have $\mathcal{E}(\ell)=0.999$. Thanks to the reduced problem, we are able to solve the problem with more controls, keeping $\Delta t = 0.1$. In the top-left panel of Figure \ref{fig4_2} we show the behaviour of the optimal policy. We note that the cases with $2$ and $3$ controls are equivalent to the full case (compare with Figure \ref{fig3_2}). The computed controls show a chattering behaviour which is then reflected in the plot of $J_{y_0,0}$ in the top-right panel of Figure \ref{fig4_2}, considering the control space $U_n$ for $n=\{2,3,\ldots,11\}$. We can see a rather similar behaviour when increasing the number of controls.

\begin{figure}[htbp]	
\centering
	\includegraphics[scale=0.3]{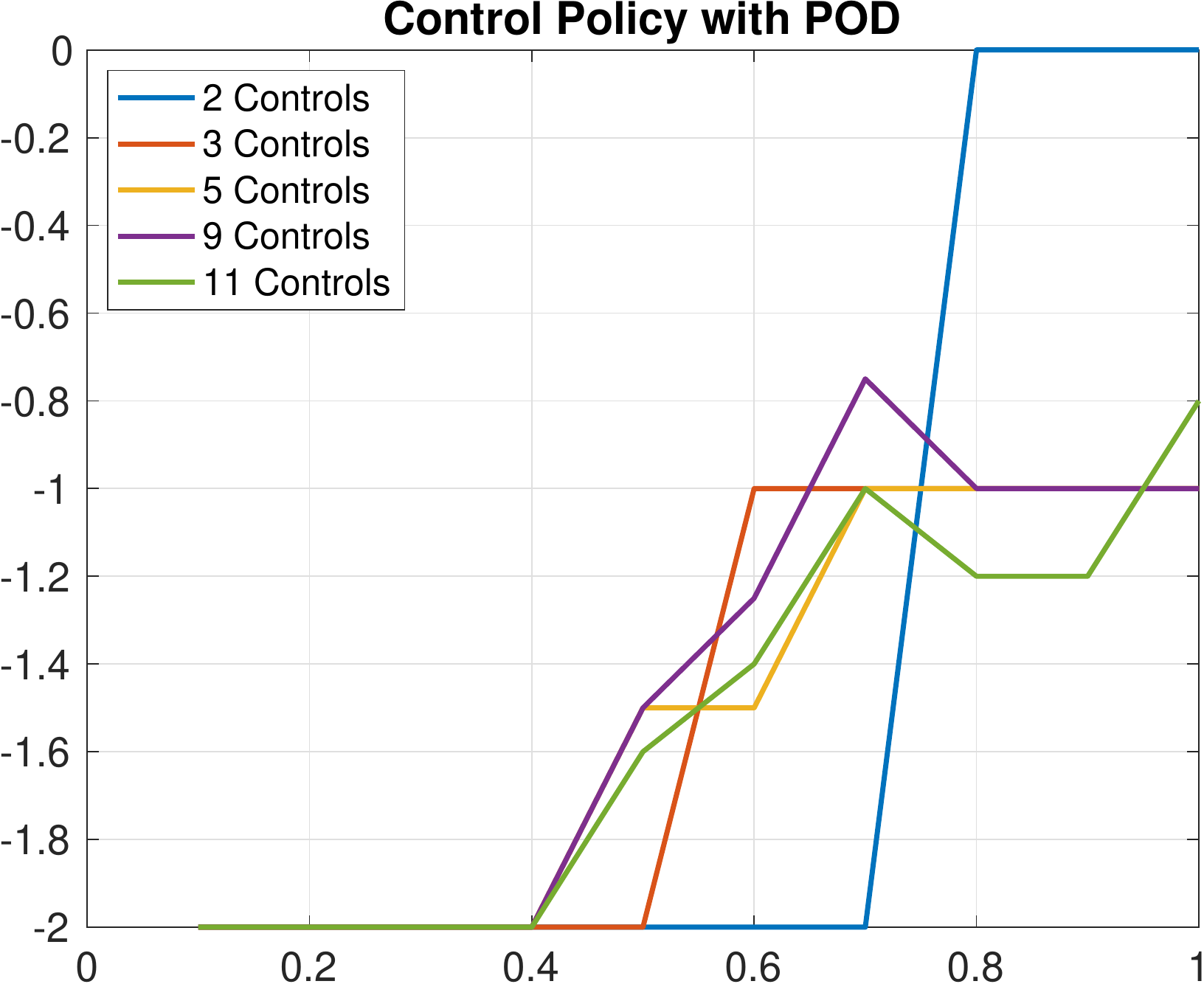}	
	\includegraphics[scale=0.3]{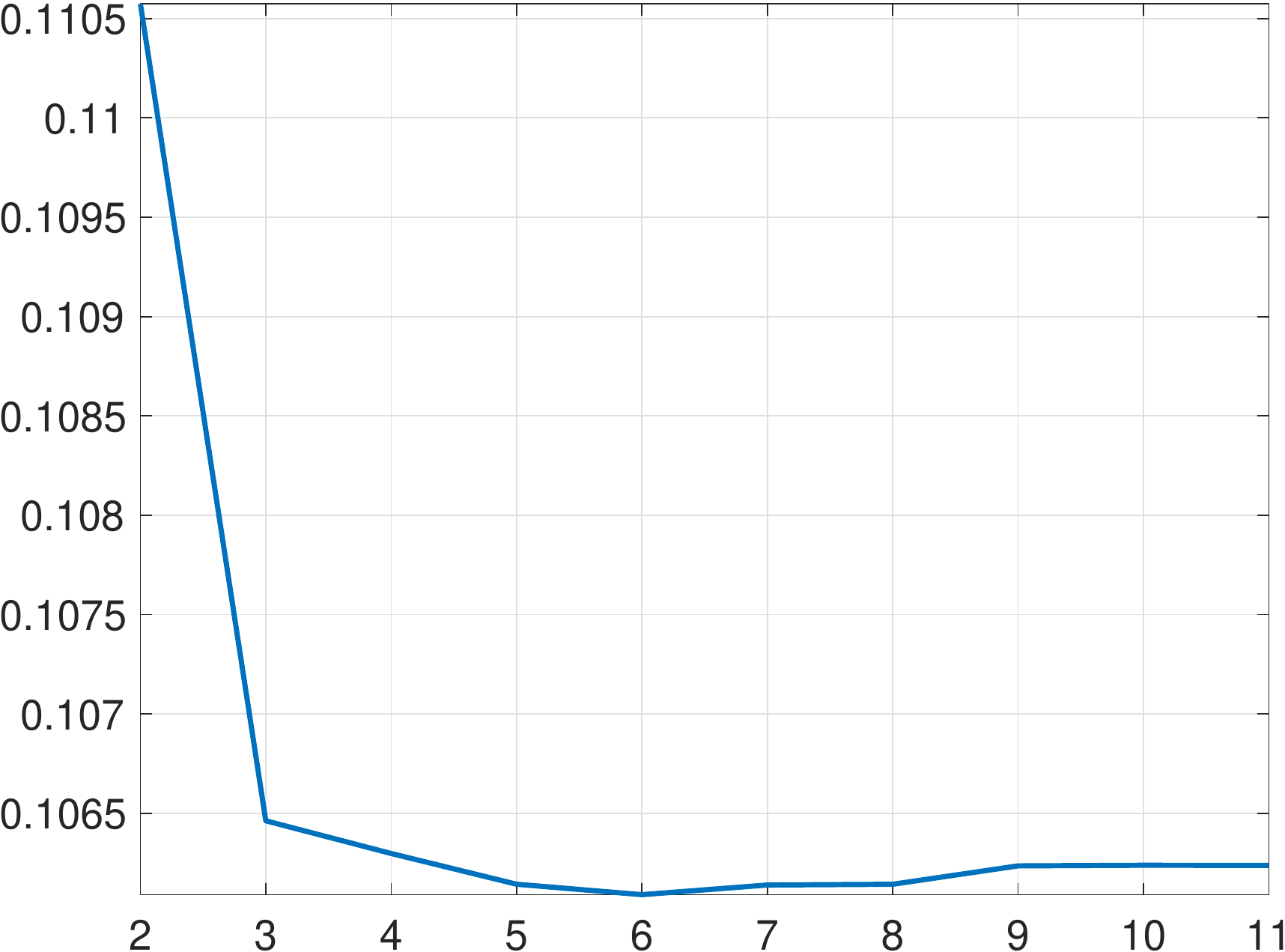}
		\includegraphics[scale=0.3]{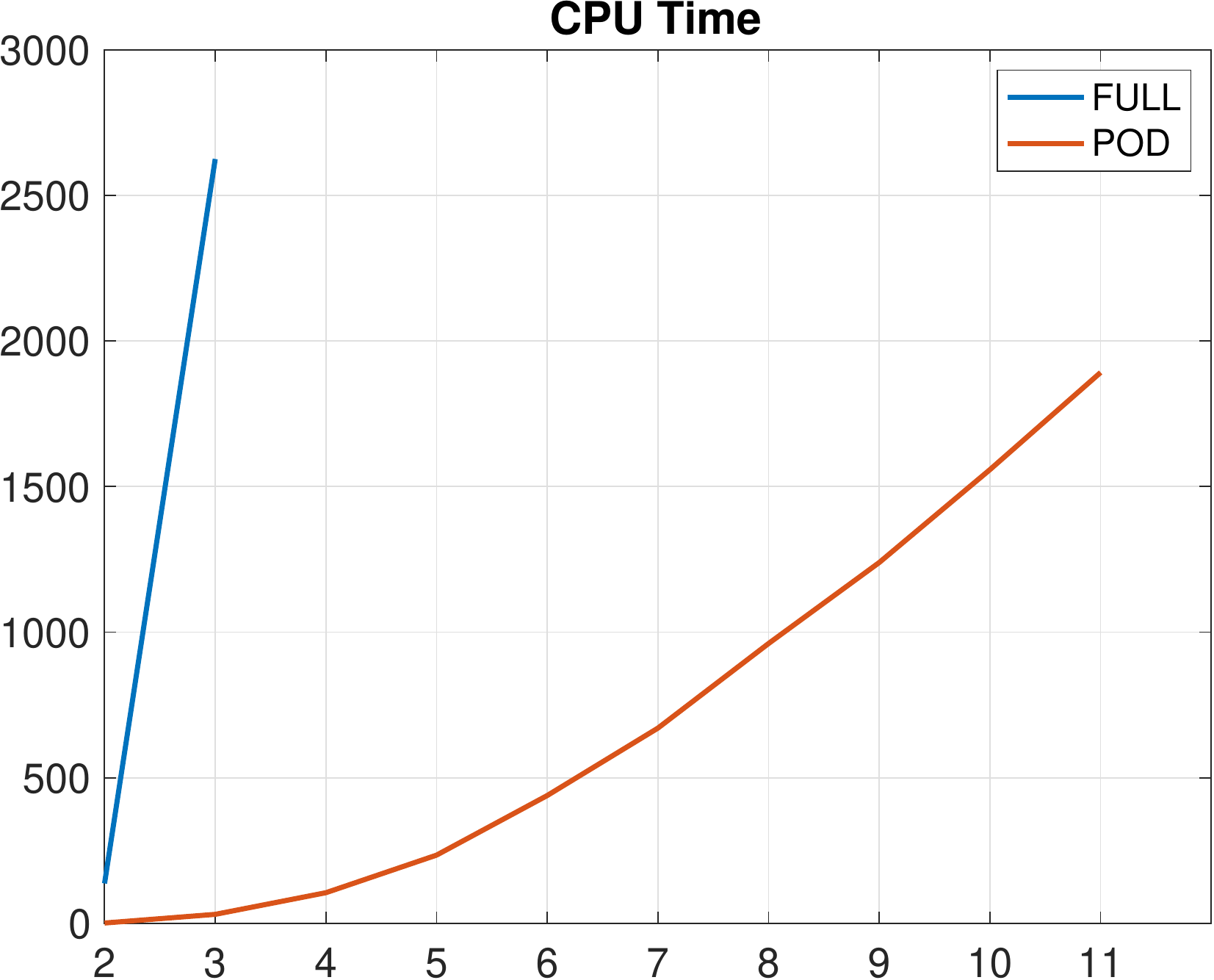}	
\caption{Test 2: Optimal policy (top-left), zoom of the cost (top-right) for $U_n$ with $n=2,3,4,\ldots,11$, and CPU time increasing the number of controls (bottom).}		
\label{fig4_2}
\end{figure}

The CPU time is reported in the bottom panel of Figure \ref{fig4_2} and it is possible to capture visually the big advantage of using model order reduction.

With the same set of snapshots, we can also decrease the temporal step size, e.g. $\Delta t= 0.05$, and compute the online stage with $U_n$ with $n=2,3.$ We see in Figure \ref{fig4_3} that the behaviour of the control policy is similar when dealing with $2$ controls, whereas the switch from $u=-2$ to $u=-1$ happens for $t=0.45$.

\begin{figure}[htbp]	
\centering
	\includegraphics[scale=0.4]{Pic_contr2_deim.pdf}	
	\includegraphics[scale=0.4]{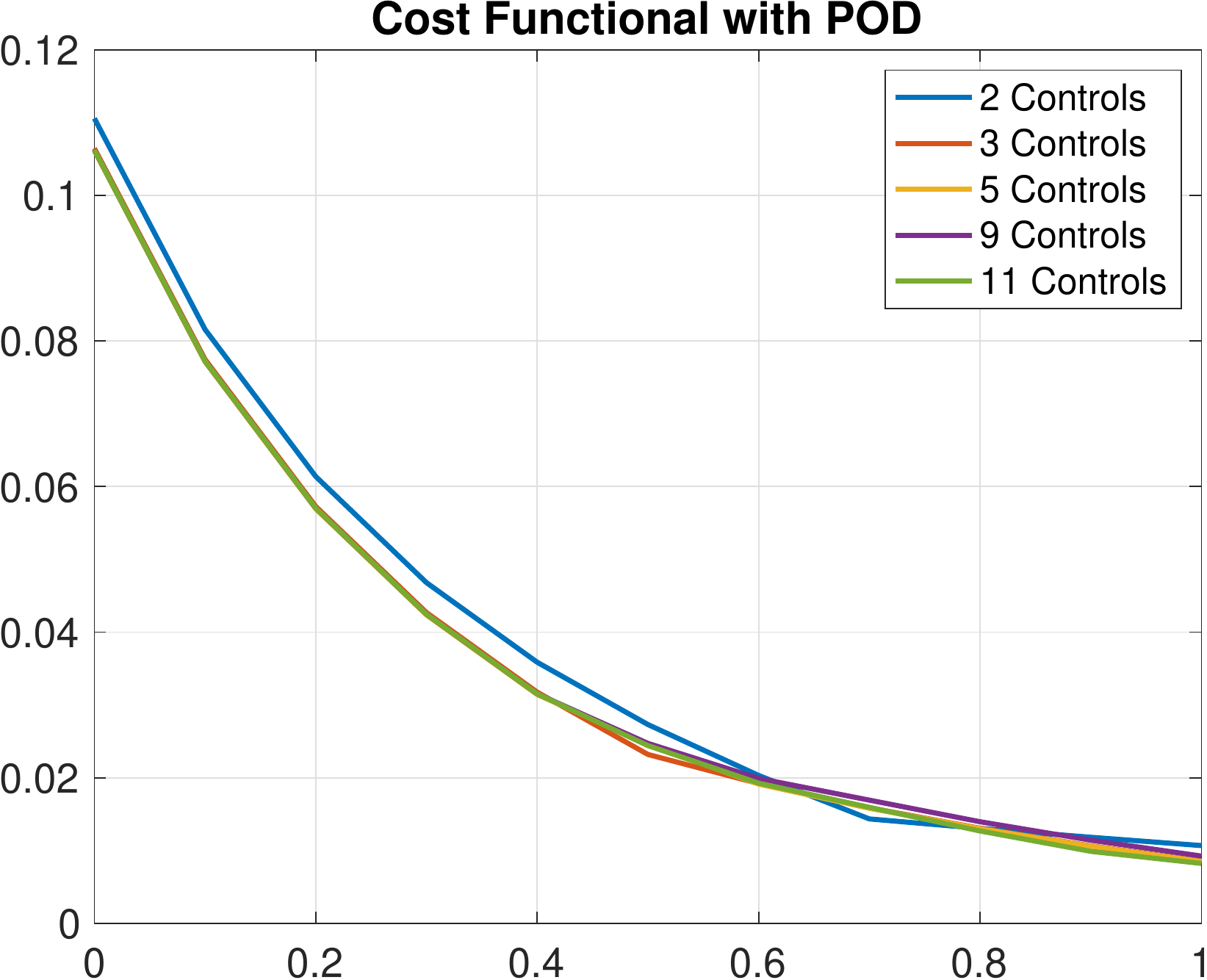}	
\caption{Test 2: Optimal policy (left) and cost functional for $U_n$ with $n=2,3$ and $\Delta t= 0.05$ (right).}	
\label{fig4_3}
\end{figure}

We can also observe that the cost functional is slightly lower when dealing with $\Delta t =0.05$ as summarized in Table \ref{tab3:cost}.

\begin{table}[htbp]
\centering
\begin{tabular}{c c c  }
$\Delta t$& $U_2$ & $U_3$  \\
\hline
0.1 & $0.1106$  & $0.1065$ \\
0.05 & $0.0995$ &  $0.0956$ \\
\hline
\end{tabular}
\caption{Test 2: Cost functional $J^\ell_{y_0,0}$ with $\Delta t \in \{0.1,0.05\}$, $U_2$ and $U_3$.}
\label{tab3:cost}
\end{table}

The cardinality of the pruned TSA-POD approach is reported in the last line of Table \ref{tab2:card}, whereas the first line is still valid for the full TSA-POD method. As expected, even when we apply model reduction, we can observe an impressive pruning if we compare with the unpruned method.

\section{Conclusions and future works}\label{sec:con}

In this work we have presented a new method that couples model order reduction with a recent technique to solve DP approach on a tree structure, proposed in \cite{AFS18, SAF18}. The tree structure needs to solve many PDEs for a given control input and, therefore, model order reduction helps to speed up its construction and also to work with a finer control set. We have also provided an error estimate to guarantee the convergence of the method which depends, as expected, on the projection error of the POD method and on the temporal discretization of the differential equations considered.
We showed through numerical tests the efficiency of the method and we would like to emphasize that the tree structure algorithm combined with model order reduction allows to solve numerical optimal control problems for nonlinear PDEs.

Some limitations of the method will be addressed in future works. Here, we strongly rely on a finite discretization of the control set. We would like to improve the feedback reconstruction by means of more sophisticated methods which do not need a finite number of controls. That will also avoid the use of a comparison method in the computation of the minimum of the hamiltonian. Clearly, the pruning rule help to reduce the dimension of the tree and to keep its cardinality feasible. 
Another interesting future application is the extension of the tree structure to stochastic control problems. 

\section*{\refname}

\end{document}